\newtheorem{theorem}{Theorem}[section]
\newtheorem{lemma}[theorem]{Lemma}
\newtheorem{corollary}[theorem]{Corollary}
\newtheorem{example}[theorem]{Example}
\newtheorem{proposition}[theorem]{Proposition}
\newtheorem{remark}[theorem]{Remark}
\newcommand{\ol}{\overline}
\newcommand{\p}[1]{\langle #1 \rangle}
\def\S{{\mathcal S}}
\def\T{{\mathbb{T}}}
\def\B{{\mathcal B}}
\begin{document}
\title[Free Objects in Triangular Matrix Varieties]{Free Objects in Triangular Matrix Varieties and Quiver Algebras over Semirings}

\date{\today}
\keywords{bicyclic monoid, upper triangular matrix semigroups, semirings, varieties, free objects, identities}
\thanks{}
\subjclass[2010]{20M07, 16Y60, 12K10}

\maketitle

\begin{center}

MARK KAMBITES\footnote{School of Mathematics, University of Manchester,
Manchester M13 9PL, UK. Email \texttt{Mark.Kambites@manchester.ac.uk}.}

\end{center}

\begin{abstract} We study the free objects in the variety of semigroups and 
variety of monoids generated by the monoid of all $n \times n$ upper 
triangular matrices over a commutative semiring. We obtain explicit 
representations of these, as multiplicative subsemigroups of quiver 
algebras over polynomial semirings. In the $2 \times 2$ case this also 
yields a representation as a subsemigroup of a semidirect product of 
commutative monoids. In particular, from the case where $n=2$ and the 
semiring is the tropical semifield, we obtain a representation of the free 
objects in the monoid and semigroup varieties generated by the bicyclic 
monoid (or equivalently, by the free monogenic inverse monoid), inside a 
semidirect product of a commutative monoid acting on a semilattice. We
apply these representations to answer several questions, including that of
when the given varieties are locally finite.
\end{abstract}

There is considerable interest in identities satisfied by matrix semigroups over semirings, due in large part to the potential of semirings as
natural carriers for linear representations of semigroups. A particular focus of recent research has been the case of the \textit{tropical semiring}. Izhakian and Margolis \cite{Izhakian10} initiated work on this topic by providing the first example of a non-trivial semigroup identity satisfied by the semigroup of all $2 \times 2$ tropical matrices; a key step in the proof was the observation that the monoid $UT_2(\mathbb{T})$ of \textit{upper triangular} matrices satisfies an identity. Further work by numerous authors  \cite{Chen16,Daviaud17,Daviaud18,Izhakian14,Izhakian14erratum,Johnson18,Johnson18b,Okninski15,Shitov18,Taylor17} has culminated in the recent proof by Izhakian and Merlet \cite{Izhakian18} (building in particular on ideas of Shitov \cite{Shitov18}) that every semigroup of tropical matrices satisfies some non-trivial identity. Again, the proof relies upon the corresponding statement for upper triangular matrices, which had been previously established \cite{Izhakian14,Izhakian14erratum,Okninski15,Taylor17}. Although the existence of non-trivial identities for tropical matrix semigroups is now resolved, a broader understanding of these identities is still lacking.

The precise identity shown in \cite{Izhakian10} to be satisfied by $UT_2(\mathbb{T})$ is, in fact, a famous example due
to Adjan \cite{Adjan66} of an identity satisfied by the \textit{bicyclic monoid} $\mathcal{B}$. Based on this and the observation that $\mathcal{B}$
has a faithful representation inside $UT_2(\mathbb{T})$, Izhakian and Margolis conjectured that in fact $UT_2(\mathbb{T})$ and $\mathcal{B}$ satisfy
exactly the same identities. Daviaud, Johnson and the author \cite{Daviaud18} developed a systematic way to study identities in upper triangular tropical matrix semigroups, allowing among other things a proof of this conjecture. By Birkhoff's theorem, it follows from this result that
$UT_2(\T)$ and $\B$ generate the same variety of semigroups: we shall call this variety, which also appears as the variety generated by the
\textit{free monogenic inverse monoid} and has been the object of much study, the \textit{bicyclic variety}. 

It is a standard result in universal algebra (see for example \cite[Proposition VI.4.5]{Cohn81}) that every variety contains a (unique up to isomorphism) free object of every rank, but in general the free objects can be hard to describe. At a recent conference\footnote{The 56th Summer School on General Algebra and Ordered Sets (SSAOS2018) in \v{S}pindler\r{u}v Ml\'{y}n, Czechia, September 2018.}, the author was asked whether the methods of \cite{Daviaud18} yield an explicit description of the free object in the bicyclic variety. This paper provides a positive answer to that question, in the form of two closely related explicit descriptions: one (Theorem~\ref{thm_freerep2}) as a multiplicative subsemigroup of a certain quiver algebra over
a tropical polynomial semiring, and the other as a subsemigroup of a semidirect product of a commutative monoid acting on a semilattice (Theorem~\ref{thm_freerep3}).

In fact, our results apply in a broader domain than just the bicyclic variety. Johnson and Fenner \cite{Johnson18} have extended the methods of \cite{Daviaud18} from the tropical case to more general semirings (and also proved some interesting results about upper
\textit{uni}triangular matrix semigroups over semirings, although these will not directly concern us here). Utilising this extension, we can
explicitly describe the free object in the variety generated by $UT_n(\S)$, for any $n$ and $\S$ an arbitrary commutative semiring with $0$ and $1$, as a multiplicative subsemigroup of a certain quiver algebra over a polynomial semiring over $\S$. When $n=2$ we also obtain an embedding
into a semidirect product of commutative semigroups (one of which is a semilattice in the case that the semiring is idempotent). For each result about a variety of semigroups, we also obtain a similar result about the corresponding variety of monoids.

Insofar as we know, our main results are new even when the semiring $\S$ is a field. For a field $k$ of characteristic $0$ the
free objects in the variety generated by $UT_n(k)$ are easily seen to be absolutely free semigroups (or free commutative semigroups if $n=1$), so the results mean
only than that certain subsemigroups of quiver algebras and semidirect products are free. But in fields of prime characteristic, the corresponding free objects seem to be new and potentially interesting examples of semigroups. We suggest that they deserve further study, and that our results will provide a means to this.

In addition to this introduction, this paper comprises four sections. Section~\ref{sec_semiringsquivers} recalls some background material and makes some elementary definitions and observations which, while we do not believe they have appeared in the literature before, are straightforward
extensions of known ideas for rings or fields; these include quiver algebras over semirings. In Section~\ref{sec_freeobj} we show that the approach to understanding identities in triangular matrix semigroups introduced in \cite{Daviaud18} and further developed in \cite{Johnson18} has a natural interpretation in terms of
quiver algebras over semirings, and that this yields a representation of each free object in an upper triangular matrix variety as a multiplicative subsemigroup of a quiver algebra. Section~\ref{sec_reduction} employs a trick to slightly reduce the complexity of this representation;
in the case of $2 \times 2$ matrices (so in particular for the bicyclic variety) this yields a representation within a semidirect
product of commutative monoids. Finally, in Section~\ref{sec_fields} we show that if a semiring $\S$ has the property that distinct formal polynomials in one variable represent distinct functions then $UT_2(\S)$ (and hence also $UT_n(\S)$ and $M_n(\S)$ for all $n \geq 2$) cannot satisfy a semigroup identity; this combines with results of
Okninski and Salwa \cite{Okninski95} to describe exactly for which fields $k$ the semigroup $UT_2(k)$ satisfies an identity or
contains a free subsemigroup.

\section{Semirings, Polynomials and Quivers}\label{sec_semiringsquivers}

In this section we briefly recall some elementary definitions relating to semirings, and proceed to introduce quiver algebras over semirings. To the best of our knowledge these have not previously appeared in the literature, but they are an entirely straightforward generalisation of quiver algebras over fields or rings.

A \textit{semiring} is a set $\S$ endowed with binary operations of addition (denoted by $+$) and multiplication (denoted by $\times$ or just by juxtaposition) and distinguished elements $0 \neq 1$, such that
both operations are associative, addition is commutative, multiplication distributes over addition, $1$ is an identity for multiplication, and $0$ is an identity element for addition and a zero element for multiplication.
The semiring $\S$ is called \textit{commutative} if multiplication is commutative,
and \textit{idempotent} if addition is idempotent, that is, if $a+a = a$ for all $a \in S$. The semiring $\S$ is a \textit{semifield} if $\S \setminus \lbrace 0 \rbrace$ forms an abelian group under multiplication. 

All (commutative) rings with identity (in particular, all fields) are examples of (commutative) semirings. 
A widely studied example with a rather different flavour is the \textit{tropical semifield} $\mathbb{T}$, with elements
$\mathbb{R} \cup \lbrace -\infty \rbrace$, maximum for its addition operation, and classical addition for its multiplication operation.
The two-element subset $\lbrace -\infty, 0 \rbrace \subseteq \mathbb{T}$ is closed under these operations and therefore forms a
\textit{subsemifield}, called the \textit{Boolean semifield} because it is isomorphic to the set $\lbrace \mathrm{False}, \mathrm{True} \rbrace$ under the logical operations
of ``or'' and ``and''. This is one of only two $2$-element semifields (up to isomorphism), the other being of course the field $\mathbb{Z}_2$.
Further examples of semirings can be found in Examples~\ref{example_freeidpt} and \ref{example_interval} below.

Now fix a commutative semiring $\S$ and let $X$ be a finite set of symbols, which we think of as variables. The set of formal polynomials in the
(commuting) variables $X$, with coefficients from $\S$, forms a commutative semiring under the obvious operations, which we denote
$\S[X]$.
Every formal polynomial $f$ naturally defines an \textit{evaluation function} $\overline{f} : \S^X \to \S$.
In general, of course, two distinct formal polynomials may have the same evaluation function; declaring two
formal polynomials to be equivalent if they have the same evaluation function defines a congruence on the semiring
of formal polynomials, the quotient by which equips the set of all polynomial evaluation functions with a commutative semiring structure. We
call this semiring the \textit{polynomial function semiring} over $\S$ in the variables $X$, and denote it $\ol{\S[X]}$. A
formal polynomial is a called a \textit{formal monomial} if it has only one non-zero term; a polynomial function is a \textit{monomial
function} if it is the evaluation function of some formal monomial.

Recall that a \textit{quiver} $\Gamma$ is a directed acyclic (except perhaps for loops) graph, possibly with loops and multiple edges. A \textit{path} $\pi$ in a quiver is a finite sequences of edges satisfying the obvious coterminality conditions; we write $|\pi|$ for the number of edges in $\pi$. For notational purposes, we identify each one-edge path
with its single edge. There is a distinct \textit{empty path} at each vertex $v$. We write $\Gamma^*$ for the set
of all paths in $\Gamma$.

Let $\mathcal{S}$ be a commutative semiring with $0$ and $1$, and $\Gamma$ a quiver. Formally speaking, the \textit{quiver algebra} (or \textit{path algebra}) $\S\Gamma$ is the set of functions from $\Gamma^*$ to $\mathcal{S}$ which are finitely supported (that is, which take all but finitely many paths to
the $0$ element of $\S$), equipped with the operations of pointwise addition and convolution multiplication: 
$$(f+g)(x) = f(x) + g(x) \ \ \textrm{ and } \ \ (fg)(x) = \sum_{st = x} f(s) g(t),$$
where $st$ denotes concatenation of paths, the condition $st = x$ is taken to be false when the product $st$ is not defined, and the empty
sum is the constant $0$ function.
The quiver algebra $\S\Gamma$ is a (non-commutative) semiring with zero element the constant function taking all paths to $0 \in S$,
and identity element the function which takes empty paths to $1 \in \S$ and other paths to $0 \in \S$. 

If we identify each path $x \in \Gamma^*$ with its characteristic function in $\S \Gamma$ then we have
$$f \ = \ \sum_{x \in \Gamma^*} f(x) x$$
for each $f \in \S \Gamma$. In other words,
elements of $\S \Gamma$ can be regarded as formal $\S$-linear combinations of elements of $\Gamma^*$. These expressions can be
manipulated using the usual commutative, associative and distributive laws, with undefined products of terms taken to be $0$ (in other words,
discarded). With this viewpoint, the zero element of $S \Gamma$ is the empty sum and the identity element is the sum of all the
empty paths at different vertices in $\Gamma$.

Throughout this paper, $\mathcal{S}$ will denote a commutative semiring with identity $1$ and zero $0$. For $n$ a natural number we
write $[n]$ for the set $\lbrace 1, \dots, n \rbrace$.

\section{Free objects as subsemigroups of quiver algebras}\label{sec_freeobj}

In this section we fix a commutative semiring $\S$ and show that the free objects in the variety generated by $UT_n(\S)$ arise as multiplicative subsemigroups of certain quiver algebras over polynomial semirings over $\S$. To this end, we begin by reformulating some definitions and results from \cite{Daviaud18} and \cite{Johnson18} in a language based on quivers. 

Fix a natural number $n$ and let $\Sigma$ be a (not necessarily finite) alphabet.
For each $v \in [n]$ and $\sigma \in \Sigma$, let $\sigma_v$ be a formal variable, and define an alphabet
$$X_{n,\Sigma} = \lbrace \sigma_v \mid \sigma \in \Sigma, v \in [n] \rbrace.$$
Recall from Section~\ref{sec_semiringsquivers} that 
$\ol{\S[X_{n,\Sigma}]}$ denotes the semiring of polynomial \textit{functions} (as distinct from formal polynomials) over the variable set $X_{n,\Sigma}$.

We denote by $\Gamma^0_{n,\Sigma}$ the quiver
which has vertex set $[n] = \lbrace 1, \dots, n \rbrace$ and for each $i \leq j$, $|\Sigma|$ edges from $i$ to $j$, labelled by the
letters in $\Sigma$. For notational convenience we identify the edge set with the set of triples
$$\lbrace (i, \sigma, j) \mid i \leq j, \sigma \in \Sigma \rbrace$$
in the obvious way.
Note that this quiver has both multiple edges and loops. We write $\Gamma_{n,\Sigma}$ for the quiver obtained from
$\Gamma^0_{n,\Sigma}$ by
removing the loops. In particular, notice that $\Gamma_{2,\Sigma}$ is the $|\Sigma|$-arrowed \textit{Kronecker quiver}.
In both $\Gamma^0_{n,\Sigma}$ and $\Gamma_{n,\Sigma}$, the labelling of edges by letters in $\Sigma$ extends naturally to
a labelling of paths by words in $\Sigma^*$. We shall write paths in $\Gamma^0_{n,\Sigma}$ and $\Gamma_{n,\Sigma}$ in angle
brackets, using notation such as $\p{1 \xrightarrow{a} 2 \xrightarrow{b} 2 \xrightarrow{b} 4}$ with the obvious meaning. In particular,
the empty path at the vertex $2$, for example, is simply written $\p{2}$.

We are especially interested in the quiver algebra $\ol{\S[X_{n,\Sigma}]} \Gamma_{n,\Sigma}$.

Now let $w$ be a word in $\Sigma^*$ and $\pi$ a path of length $k$ in $\Gamma_{n,\Sigma}$.  We define a \textit{$\pi$-$w$-amble} to be a path
in $\Gamma^0_{n,\Sigma}$ with label $w$, and with the property that removing the loop edges yields the path $\pi$. Notice that each
$\pi$-$w$-amble $\tau$ naturally corresponds to an occurrence of the label of $\pi$ as a scattered subword in $w$, namely the occurrence comprising those letters of $w$ which are read along non-loop edges in $\tau$. 
Notice also that $w$ and $\pi$ are both recoverable from $\tau$ (as
its label, and the path obtained by removing loops, respectively).

Now fix a $\pi$-$w$-amble $\tau$, and for $i \in [|w|] = [|\tau|]$ define
$$x_i = \begin{cases}
1 \in \S & \textrm{ if the $i$th edge of $\tau$ is a non-loop; or} \\
\sigma_v \in X_{n,\Sigma} & \textrm{ if the $i$th edge of $\tau$ is a loop at vertex $v$ with label $\sigma$.}
\end{cases}
$$
 We associate to $\tau$ a 
monomial function
$$\mu_\tau \ = \ \prod_{i = 1}^{|\tau|} x_i  \ \in \ \ol{\S [X_{n,\Sigma}]}$$
which we call the \textit{corresponding monomial} to $\tau$. Here the empty product is taken to be the identity element $1$, so if $w$ is the empty word
and $\tau$ therefore an empty path we have $\mu_\tau = 1$.

We associate to $w$ and $\pi$ the polynomial function
$$f_\pi^w \ = \ \sum_{\tau} \mu_\tau \ \in \ \ol{\S [X_{n,\Sigma}]}$$
where the summation index $\tau$ ranges over $\pi$-$w$-ambles. The empty sum is taken to be the zero element $0$, so if
there are no $\pi$-$w$-ambles (that is, if the label of $\pi$ does not occur as a scattered subword in $w$) then $f_\pi^w = 0$.

\begin{remark}\label{remark_dictionary}
The functions $f_\pi^w$ are an alternative notation for certain functions first defined in the tropical case by Daviaud, Johnson and the
author \cite{Daviaud18} and extended to the general commutative semiring case by Johnson and Fenner \cite{Johnson18}.
A path $\pi$ in $\Gamma_{n,\Sigma}$ corresponds, in the language of \cite{Johnson18}, to a $\rho$ in $[n]$ which is the sequence of vertices
together with a word $u \in \Sigma^*$ which is its label. Each of our variables $\sigma_i \in X_{n,\Sigma}$ corresponds to the variable
which in \cite{Johnson18} is called $x(\sigma,i)$. Through this dictionary, for each $w \in \Sigma^*$, the definition of our function $f_\pi^w$ is the same as the definition of the function called $f_{u,\rho}^w$ in \cite{Johnson18}. The functions which in \cite{Johnson18} are called $f_u^w$ are those
corresponding to $f_\pi^w$ where $\pi$ is the unique path from $1$ to $|u|+1$ labelled $u$ in $\Gamma_{n,\Sigma}$.
\end{remark}

\begin{example}
Let $n = 3$, consider the word $w = abba$, and the path $\pi = \langle 1 \xrightarrow{a} 2 \xrightarrow{b} 3 \rangle$. There are two $\pi$-$w$-ambles
corresponding to the two ways in which $ab$ occurs as a scattered subword in $abba$: they are
$\langle 1 \xrightarrow{a} 2 \xrightarrow{b} 2 \xrightarrow{b} 3 \xrightarrow{a} 3 \rangle$ and
$\langle 1 \xrightarrow{a} 2 \xrightarrow{b} 3 \xrightarrow{b} 3 \xrightarrow{a} 3 \rangle$. The corresponding monomials are $b_2 a_3$ and $b_3 a_3$ respectively, so $f_{\pi}^w = b_2 a_3 + b_3 a_3$.
\end{example}

\begin{remark}\label{remark_emptypath}
Notice that if $\pi$ is the empty path at some vertex $v$ then there is only a single $\pi$-$w$-amble (namely, the path in
$\Gamma_{n,\Sigma}$ which reads $w$ round the loops at $v$) and $f_\pi^w$ is therefore a monomial. In fact in this case $f_\pi^w$ is essentially
just the abelianisation of the word $w$, viewed through a correspondence which identifies $\sigma \in \Sigma$ with $\sigma_v \in X_{n,\Sigma}$.
\end{remark}

\begin{remark}\label{remark_homogeneous}
For a fixed $\pi$, $w$ and a fixed $\sigma \in \Sigma$, every term in the definition of $f_{\pi}^w$ contains the same number of variables of the form $\sigma_i$ (totalled for all values of $i$): namely, the number of times $\sigma$ appears in $w$ minus the number of times it appears in the label of $\pi$. Of course if $\sigma$ appears fewer times in $w$ than in the label of $\pi$, there are no $\pi$-$w$-ambles so $f_{\pi}^w = 0$.
\end{remark}

The idea behind the following remark is implicit in the proof of \cite[Theorem 2.2]{Johnson18}; it is helpful for our purposes to make it explicit.

\begin{remark}\label{remark_relabel}
Suppose $\pi$ and $\phi$ are paths in $\Gamma_{n,\Sigma}$ with the same label (and hence the same length). Choose a permutation $\eta$ of $[n]$
which maps the $k$th vertex of $\pi$ to the $k$th vertex of $\phi$ for every $k \in [|\pi|] = [|\phi|]$, and consider the permutation of $X_{n,\Sigma}$ which
takes $\sigma_i$ to $\sigma_{\eta(i)}$ for each $\sigma \in \Sigma$ and $i \in [n]$. Since the definition of the polynomial function
semiring $\ol{\S[X_{n,\Sigma}]}$ is symmetric in the different variables, this permutation
extends to a semiring automorphism of $\ol{\S[X_{n,\Sigma}]}$, and it is easy to see from the definitions that this automorphism maps 
$f_\pi^w$ to $f_\phi^w$ for all $w \in \Sigma^*$. The existence of such an automorphism means in particular that, for every $w,v \in \Sigma^*$, we have $f_\pi^w = f_\pi^v$ if and only $f_\phi^w = f_\phi^v$.
\end{remark}

The following theorem is essentially a restatement (with different notation) of a result which for the tropical semiring was established by Daviaud, Johnson and the author \cite{Daviaud18}, and for general commutative semirings is implicit in recent work of Johnson and Fenner \cite{Johnson18}.

\begin{theorem}\label{thm_fenner}
Let $\S$ be a commutative semiring with $0$ and $1$, $\Sigma$ an alphabet and $v, w \in \Sigma^*$. Then the identity $v = w$ holds in $UT_n(\S)$ if and only if $f_\pi^v$ = $f_\pi^w$ for all paths in $\pi$ in $\Gamma_{n,\Sigma}$.
\end{theorem}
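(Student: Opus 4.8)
The plan is to establish a single structural formula relating the entries of a matrix product to the functions $f_\pi^w$, and then read off both implications from it. Fix a substitution $\sigma \mapsto M_\sigma \in UT_n(\S)$ of the letters of $\Sigma$ by upper triangular matrices, write $d_{\sigma,v} = (M_\sigma)_{vv}$ for the diagonal entries, and let $\mathbf{d}$ denote the resulting assignment of semiring values to the variables $\sigma_v \in X_{n,\Sigma}$. For a word $w = \sigma^{(1)} \cdots \sigma^{(k)}$ and indices $i \le j$, I would expand the $(i,j)$ entry of $M_w = M_{\sigma^{(1)}} \cdots M_{\sigma^{(k)}}$ as a sum over sequences $i = v_0 \le v_1 \le \cdots \le v_k = j$ of products $\prod_t (M_{\sigma^{(t)}})_{v_{t-1} v_t}$. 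Each such sequence is precisely a path in $\Gamma^0_{n,\Sigma}$ with label $w$, that is, a $\pi$-$w$-amble where $\pi$ is the path obtained by deleting the loop steps. Grouping the sum according to this underlying path $\pi$, and observing that the off-diagonal (non-loop) factors depend only on $\pi$ while the loop factors reassemble exactly into the monomials $\mu_\tau$ summing to $f_\pi^w$, I would obtain the key identity
$$(M_w)_{ij} \ = \ \sum_{\pi} P_\pi \, f_\pi^w(\mathbf{d}),$$
where the sum is over paths $\pi$ from $i$ to $j$ in $\Gamma_{n,\Sigma}$ and $P_\pi = \prod_{e} (M_{\ell(e)})_{s(e)\,t(e)}$ is the product of the off-diagonal matrix entries along the edges of $\pi$.

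Given this formula, the direction ($\Leftarrow$) is immediate: if $f_\pi^v = f_\pi^w$ for every path $\pi$, then $(M_v)_{ij} = (M_w)_{ij}$ for every substitution and all $i \le j$, so $M_v = M_w$ and the identity holds in $UT_n(\S)$.

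For the converse I would isolate one path at a time. Fix a path $\pi$ from $i$ to $j$ together with an arbitrary assignment $\mathbf{d}$ of semiring values to the diagonal variables, and construct matrices $M_\sigma$ whose diagonals realise $\mathbf{d}$, whose $(a,b)$ off-diagonal entry is $1$ whenever $(a,\sigma,b)$ is an edge of $\pi$, and which is $0$ in every other off-diagonal position. This assignment is consistent because distinct edges occupy distinct matrix positions. With these matrices $P_\pi = 1$, while for any other path $\pi'$ from $i$ to $j$ the scalar $P_{\pi'}$ vanishes. The crucial point, which I expect to be the main (though not severe) obstacle, is the combinatorial observation that the vertices along any path in $\Gamma_{n,\Sigma}$ strictly increase, so a path from $i$ to $j$ using only edges of $\pi$ must traverse them in their given order and hence coincide with $\pi$; any genuinely different $\pi'$ is therefore forced to use an edge outside $\pi$, whose matrix entry is $0$. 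Consequently the key formula collapses to $(M_w)_{ij} = f_\pi^w(\mathbf{d})$, and likewise $(M_v)_{ij} = f_\pi^v(\mathbf{d})$. Since the identity $v = w$ forces $(M_v)_{ij} = (M_w)_{ij}$, we obtain $f_\pi^v(\mathbf{d}) = f_\pi^w(\mathbf{d})$; as the off-diagonal choices did not depend on $\mathbf{d}$ and $\mathbf{d}$ ranges over all assignments, this yields the equality of polynomial functions $f_\pi^v = f_\pi^w$. Applying this for every path $\pi$ completes the proof.
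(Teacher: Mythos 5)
Your proof is correct, but it takes a genuinely different route from the paper. The paper does not argue from first principles at all: it deduces both directions from the cited result [Johnson18, Theorem 2.2] via the notational dictionary of Remark~\ref{remark_dictionary}, and then needs the relabelling automorphism of Remark~\ref{remark_relabel} to pass from the ``optimised'' family of polynomials in that reference (one path per label) back to the symmetric statement for \emph{all} paths $\pi$. You instead prove the theorem directly, by expanding $(M_{\sigma^{(1)}}\cdots M_{\sigma^{(k)}})_{ij}$ as a sum over weakly increasing vertex sequences, identifying these with $\pi$-$w$-ambles, and extracting the identity $(M_w)_{ij} = \sum_\pi P_\pi\, f_\pi^w(\mathbf{d})$; the converse then follows from the substitution that puts $1$ on the positions occupied by the edges of a chosen $\pi$ and $0$ elsewhere off the diagonal, together with the (correct) observation that since vertices strictly increase along paths in $\Gamma_{n,\Sigma}$, the only path from $i$ to $j$ supported on the edges of $\pi$ is $\pi$ itself, so every other $P_{\pi'}$ vanishes. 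This is essentially the ``alternative direct approach'' that the paper explicitly flags in the remark following its proof (the arguments used for [Daviaud18, Theorem 5.2]), and as the paper predicts, your route makes Remark~\ref{remark_relabel} unnecessary because the unoptimised, fully symmetric statement falls out at once. What the paper's proof buys is brevity given the external reference; what yours buys is self-containedness and an explicit structural link between matrix entries and the functions $f_\pi^w$, which is arguably the conceptual heart of the whole construction. All the small points you need (that $0$ annihilates products in a semiring, that distinct edges of $\pi$ occupy distinct matrix positions, and the treatment of empty paths giving the diagonal entries) do go through.
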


\begin{proof}
If $f_\pi^v = f_\pi^w$ (in our notation) for all paths $\pi$ in $\Gamma_{n,\Sigma}$ then in particular (through the dictionary in Remark~\ref{remark_dictionary}) in the notation of \cite{Johnson18} we have $f_{u}^w = f_{u}^v$ for
all words $u$ of length $n-1$ or less, so by \cite[Theorem 2.2]{Johnson18} the identity $v=w$ holds in $UT_n(\S)$.

Conversely, if the identity $v=w$ holds in $UT_n(\S)$ then again using \cite[Theorem 2.2]{Johnson18} and Remark~\ref{remark_dictionary} for every word $u \in \Sigma^*$ of length $n-1$ or less
there is a path $\pi$ in $\Gamma_{n,\Sigma}$ with $f_\pi^v = f_\pi^w$. It follows by Remark~\ref{remark_relabel} that we have $f_\pi^v = f_\pi^w$
for all paths $\pi$ in $\Gamma_{n,\Sigma}$. 
\end{proof}
\begin{remark}
The statement of \cite[Theorem 2.2]{Johnson18} is deliberately formulated to reduce the number of polynomials considered, using the principle of
Remark~\ref{remark_relabel}. This optimisation is conducive to efficiency in algorithmic applications, but destroys a symmetry
which is helpful for our current, more theoretical, approach. The role of Remark~\ref{remark_relabel} in the above proof of our Theorem~\ref{thm_fenner} is to ``undo'' this optimisation, allowing us to obtain an ``unoptimised'' result with more symmetry. One
could alternatively establish Theorem~\ref{thm_fenner} directly using the kind of arguments used to prove \cite[Theorem 5.2]{Daviaud18}; for this approach the observation in Remark~\ref{remark_relabel} is not needed.
\end{remark}

Theorem~\ref{thm_fenner} has an interpretation in a certain quiver algebra over the semiring of
polynomial functions over $\S$. We define a function
$$\rho \ : \ \Sigma^* \to \ol{\S[X_{n,\Sigma}]} \Gamma_{n,\Sigma}, \ \ w \mapsto \sum_{\pi} f_{\pi}^w \pi$$
where the summation index $\pi$ ranges over all paths in $\Gamma_{n,\Sigma}$. We note that for $\sigma \in \Sigma$ the image
$\rho(\sigma)$ is the sum of all the length-$1$ paths labelled $\sigma$ and for each $v \in [n]$, $\sigma_v$ times the empty
path at $v$.

\begin{example}\label{example_rho}
Suppose $n = 3$ and $a, b \in \Sigma$ are generators. Then
$$\rho(a) \ = \ a_1 \langle 1 \rangle + a_2 \langle 2 \rangle + a_3 \p{3} + \langle 1 \xrightarrow{a} 2 \rangle + \langle 2 \xrightarrow{a} 3 \rangle + \langle 1 \xrightarrow{a} 3 \rangle, \textrm{ and }$$
$$\rho(b) \ = \ b_1 \p{1} + b_2 \p{2} + b_3 \p{3} + \p{1 \xrightarrow{b} 2} + \p{2 \xrightarrow{b} 3} + \p{1 \xrightarrow{b} 3}$$
while
\begin{align*}
\rho(ab) \ = \ &a_1 b_1 \p{1} + a_2 b_2 \p{2} + a_3 b_3 \p{3} \\
&+ b_2 \p{1 \xrightarrow{a} 2} + b_3 \p{1 \xrightarrow{a} 3} + b_3 \p{2 \xrightarrow{a} 3} \\
&+ a_1 \p{1 \xrightarrow{b} 2} + a_1 \p{1 \xrightarrow{b} 3} + a_2 \p{2 \xrightarrow{b} 3} \\
&+ \p{1 \xrightarrow{a} 2 \xrightarrow{b} 3}.
\end{align*}
where the terms in the four lines of the latter correspond respectively to the four scattered subwords of $ab$ (respectively $\epsilon$, $a$, $b$ and $ab$). Notice that the
coefficients of are all monomials; this reflects the fact that in these examples each scattered subword of $ab$ occurs as such in a unique way; in other
words, for each path $\pi$ there is at most one $\pi$-$ab$-amble. In contrast,
\begin{align*}
\rho(aa) \ = \ &a_1^2 \p{1} + a_2^2 \p{2} + a_3^2 \p{3} \\
&+ (a_1 + a_2) \p{1 \xrightarrow{a} 2} + (a_1 + a_3) \p{1 \xrightarrow{a} 3} + (a_2 + a_3) \p{2 \xrightarrow{a} 3} \\
&+ \p{1 \xrightarrow{a} 2 \xrightarrow{a} 3}.
\end{align*}
where the three lines correspond to the scattered subwords $\epsilon$, $a$ and $aa$, and the binomial terms occurring in the second
line reflect the fact that $a$ occurs twice as a scattered subword of $aa$.

Notice that if we compute products in the quiver algebra, it transpires that $\rho(a) \rho(b) = \rho(ab)$, and similarly $\rho(a) \rho(a) = \rho(aa)$.
As we shall see shortly (in Lemma~\ref{lemma_morphism} and Theorem~\ref{thm_freerep1}), this is not a coincidence.
\end{example}

Since the different functions $f_{\pi}^w$ are clearly
recoverable from $\rho(w)$, Theorem~\ref{thm_fenner} can be restated as follows:
\begin{theorem}\label{thm_identities}
Let $\S$ be a commutative semiring with $0$ and $1$, $\Sigma$ an alphabet and $u, v \in \Sigma^*$, and let $\rho$ be as defined above. Then the identity $u = v$ holds in $UT_n(\S)$ if and only if $\rho(u) = \rho(v)$ in $\ol{\S[X_{n,\Sigma}]} \Gamma_{n,\Sigma}$.
\end{theorem}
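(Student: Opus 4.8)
The plan is to deduce Theorem~\ref{thm_identities} directly from Theorem~\ref{thm_fenner} by unpacking the definition of $\rho$ and using the fact that equality in a quiver algebra is tested coefficientwise. First I would recall that, by its very definition, $\rho(w) = \sum_{\pi} f_\pi^w \pi$, so the coefficient of each path $\pi$ in the element $\rho(w) \in \ol{\S[X_{n,\Sigma}]}\Gamma_{n,\Sigma}$ is precisely the polynomial function $f_\pi^w$.

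Before this is meaningful I should verify that $\rho(w)$ is genuinely an element of the quiver algebra, that is, that it is finitely supported. This follows from the definition of $f_\pi^w$: we have $f_\pi^w = 0$ whenever the label of $\pi$ fails to occur as a scattered subword of $w$ (the empty sum over $\pi$-$w$-ambles being the zero function). Hence every path contributing a non-zero coefficient to $\rho(w)$ has a label which is a scattered subword of $w$; in particular such a label uses only the finitely many letters appearing in $w$ and has length at most $|w|$. Since $\Gamma_{n,\Sigma}$ has only the $n$ vertices of $[n]$, there are only finitely many such paths, so $\rho(w)$ is finitely supported and $\rho$ is well defined.

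With this in hand I would invoke the description of elements of $\ol{\S[X_{n,\Sigma}]}\Gamma_{n,\Sigma}$ as finitely supported functions on $\Gamma_{n,\Sigma}^*$ (equivalently, as formal $\S$-linear combinations of paths): two such elements coincide if and only if they assign the same coefficient to every path. Applying this to $\rho(u)$ and $\rho(v)$ shows that $\rho(u) = \rho(v)$ in the quiver algebra if and only if $f_\pi^u = f_\pi^v$ for every path $\pi$ in $\Gamma_{n,\Sigma}$. By Theorem~\ref{thm_fenner} this latter condition is equivalent to the identity $u = v$ holding in $UT_n(\S)$, completing the argument.

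I do not expect a genuine obstacle here, as the statement is a faithful reformulation of Theorem~\ref{thm_fenner}; the only substantive point is the finite-support verification that legitimises the definition of $\rho$. The essential content, as signalled in the parenthetical remark preceding the theorem, is simply that the whole family of functions $(f_\pi^w)_\pi$ is losslessly encoded in the single quiver-algebra element $\rho(w)$, so that an equality of families becomes an equality of elements.
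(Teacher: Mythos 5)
Your proposal is correct and matches the paper's approach: the paper gives no separate proof, simply observing that the functions $f_\pi^w$ are recoverable as the coefficients of $\rho(w)$, so that Theorem~\ref{thm_identities} is a coefficientwise restatement of Theorem~\ref{thm_fenner}. Your additional verification that $\rho(w)$ is finitely supported (hence a genuine element of the quiver algebra) is a point the paper leaves implicit, and is a worthwhile check.
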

Taken alone, Theorem~\ref{thm_identities} is simply a restatement in an alternative notation of Theorem~\ref{thm_fenner}, which itself is
essentially a reformulation of results from \cite{Daviaud18} and \cite{Johnson18}. However, the key observation of this paper is that the function $\rho$ is actually a \textit{morphism} of monoids from $\Sigma^*$ to the multiplicative monoid of the
quiver algebra $\ol{\S[X_{n,\Sigma}]} \Gamma_{n,\Sigma}$. From this, it follows that its image, as a multiplicative semigroup, is a faithful, concrete representation of the free
$\Sigma$-generated object in the monoid variety generated by $UT_n(\S)$. The rest of this section is concerned with establishing
these claims.

\begin{lemma}\label{lemma_splitpath}
Let $u, v \in \Sigma^*$, $\pi$ be a path in $\Gamma_{n,\Sigma}$, and $m$ a monomial function in $\ol{\S [X_{n,\Sigma}]}$. Then there is a one-to-one correspondence between
\begin{itemize}
\item $\pi$-$uv$-ambles with corresponding monomial $m$; and
\item quadruples $(\alpha, \beta, \gamma, \delta)$ where $\pi = \alpha \beta$ is a factorisation of $\pi$, $\gamma$ is an $\alpha$-$u$-amble
and $\delta$ is a $\beta$-$v$-amble such that $\mu_\gamma \mu_\delta = m$.
\end{itemize}
\end{lemma}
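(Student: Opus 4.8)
The plan is to construct the correspondence explicitly, passing between a $\pi$-$uv$-amble and the quadruple by splitting or concatenating paths at the position separating the $u$-part from the $v$-part of the label. Two elementary facts will do all the work: deletion of loop edges is compatible with concatenation of paths, and the monomial $\mu$ is multiplicative under concatenation.

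First I would set up the forward map. Given a $\pi$-$uv$-amble $\tau$, note that $\tau$ is a path in $\Gamma^0_{n,\Sigma}$ whose label is $uv$; since each edge carries exactly one letter, $\tau$ has exactly $|u|+|v|$ edges, the first $|u|$ of which spell $u$ and the last $|v|$ of which spell $v$. Let $\gamma$ be the prefix consisting of the first $|u|$ edges and $\delta$ the suffix consisting of the last $|v|$ edges, so that $\tau = \gamma\delta$ with $\gamma, \delta$ paths in $\Gamma^0_{n,\Sigma}$ labelled $u$ and $v$. Let $\alpha$ and $\beta$ be obtained from $\gamma$ and $\delta$ by deleting loop edges. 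Because deleting loops commutes with concatenation, deleting the loops of $\tau = \gamma\delta$ produces the concatenation of $\alpha$ and $\beta$; since deleting the loops of $\tau$ yields $\pi$ by hypothesis, this gives the factorisation $\pi = \alpha\beta$, with $\gamma$ an $\alpha$-$u$-amble and $\delta$ a $\beta$-$v$-amble. Finally, as $\mu_\tau$ is the ordered product of the per-edge contributions over the edges of $\tau$, which are exactly those of $\gamma$ followed by those of $\delta$, we get $\mu_\tau = \mu_\gamma\mu_\delta = m$, yielding the required quadruple.

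For the reverse map, given a quadruple $(\alpha, \beta, \gamma, \delta)$ as in the statement, I would set $\tau = \gamma\delta$. Here one must check the concatenation is defined: the terminal vertex of $\gamma$ is the terminal vertex of $\alpha$, which equals the initial vertex of $\beta$ since $\pi = \alpha\beta$, which is the initial vertex of $\delta$ as $\delta$ is a $\beta$-$v$-amble; hence $\gamma$ and $\delta$ are coterminal and $\tau$ is a genuine path. Its label is $uv$, deleting its loops yields $\alpha\beta = \pi$ (again using compatibility of loop-deletion with concatenation), so $\tau$ is a $\pi$-$uv$-amble, and $\mu_\tau = \mu_\gamma\mu_\delta = m$.

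It then remains to verify the two maps are mutually inverse. Splitting $\tau = \gamma\delta$ at position $|u|$ and reconcatenating plainly recovers $\tau$; conversely, forming $\tau = \gamma\delta$ from a quadruple and splitting again at position $|u|$ recovers $\gamma$ and $\delta$ (because $\gamma$ has exactly $|u|$ edges), and hence $\alpha, \beta$ by loop-deletion. The only point needing care is that the factorisation $\pi = \alpha\beta$ recorded in the quadruple is forced by $\tau$, which holds because $\alpha$ and $\beta$ are determined as the loop-deletions of the two halves of $\tau$. I expect no genuine obstacle here: the entire content is the two twin facts above, and the only items requiring attention are the coterminality check in the reverse direction and confirming that the splitting position and resulting factorisation are uniquely determined.
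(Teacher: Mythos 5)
Your proof is correct and follows essentially the same route as the paper: split the $\pi$-$uv$-amble at the $u$/$v$ boundary, pass to $\alpha$ and $\beta$ by deleting loops, and use that loop-deletion and $\mu$ are both compatible with concatenation. The extra care you take over coterminality in the reverse direction and the mutual-inverse check is fine but adds nothing beyond what the paper treats as immediate.
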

\begin{proof}
Suppose $\tau$ is a $\pi$-$uv$-amble with corresponding monomial $m$. Let $\gamma$ be the prefix of $\tau$ labelled by
$u$ and $\delta$ the suffix of $\tau$ labelled by $v$. Let $\alpha$ and $\beta$ be the paths in $\Gamma_{n,\Sigma}$ obtained by removing
the loop edges from $\gamma$ and $\delta$ respectively. It is immediate from the definitions that
\begin{itemize}
\item $\pi = \alpha \beta$;
\item $\gamma$ is an $\alpha$-$u$-amble; and
\item $\delta$ is a $\beta$-$v$-amble.
\end{itemize}
Since the paths $\gamma$ and $\delta$ between them trace exactly the same loop edges as $\tau$, we see that
$\mu_\gamma \mu_\delta = \mu_\tau = m$ by definition as formal monomials, and therefore also as monomial functions. Thus, $(\alpha, \beta, \gamma, \delta)$ is a quadruple with the given
properties.

It is clear that this quadruple is uniquely determined by the $\pi$-$uv$-amble $\tau$. Moreover, every quadruple $(\alpha, \beta, \gamma, \delta)$
with the given properties will arise in this way by considering the $\pi$-$uv$-amble $\gamma \delta$.
\end{proof}

\begin{lemma}\label{lemma_morphism}
With notation as above, the map $\rho$ is a monoid morphism from the free monoid $\Sigma^*$ to the multiplicative semigroup of
the quiver algebra $\ol{\S[X_{n,\Sigma}]} \Gamma_{n,\Sigma}$.
\end{lemma}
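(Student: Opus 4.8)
The plan is to verify the two monoid-morphism conditions directly: that $\rho$ sends the empty word to the identity of the quiver algebra, and that $\rho(uv) = \rho(u)\rho(v)$ for all $u, v \in \Sigma^*$. The identity condition is immediate: the empty word $\epsilon$ has only the empty $\pi$-$\epsilon$-ambles, which exist precisely when $\pi$ is an empty path $\langle v \rangle$, and in that case $f_{\langle v \rangle}^\epsilon = 1$ (the empty product). Hence $\rho(\epsilon) = \sum_{v \in [n]} 1 \cdot \langle v \rangle$, which is exactly the identity element of $\ol{\S[X_{n,\Sigma}]}\Gamma_{n,\Sigma}$ as described in Section~\ref{sec_semiringsquivers}.

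For the multiplicativity I would fix $u, v \in \Sigma^*$ and a path $\pi$ in $\Gamma_{n,\Sigma}$, and compare the coefficient of $\pi$ on both sides of $\rho(uv) = \rho(u)\rho(v)$. On the left the coefficient is $f_\pi^{uv}$ by definition. On the right, by the convolution rule in the quiver algebra, the coefficient of $\pi$ in $\rho(u)\rho(v)$ is
\begin{equation*}
\sum_{\pi = \alpha\beta} f_\alpha^u \, f_\beta^v,
\end{equation*}
where the sum ranges over factorisations $\pi = \alpha\beta$ of the path $\pi$ into concatenable subpaths. So the whole task reduces to establishing the single polynomial-function identity
\begin{equation*}
f_\pi^{uv} \ = \ \sum_{\pi = \alpha\beta} f_\alpha^u \, f_\beta^v
\end{equation*}
in $\ol{\S[X_{n,\Sigma}]}$ for every $\pi$.

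This identity is where Lemma~\ref{lemma_splitpath} does the real work, so it is the conceptual heart of the argument rather than an obstacle. I would expand the right-hand side using the definitions $f_\alpha^u = \sum_\gamma \mu_\gamma$ and $f_\beta^v = \sum_\delta \mu_\delta$, obtaining $\sum_{\pi = \alpha\beta} \sum_{\gamma,\delta} \mu_\gamma \mu_\delta$, where $\gamma$ runs over $\alpha$-$u$-ambles and $\delta$ over $\beta$-$v$-ambles. Grouping by the value of the monomial $m = \mu_\gamma\mu_\delta$, this is $\sum_m c_m \, m$ where $c_m$ counts quadruples $(\alpha,\beta,\gamma,\delta)$ with $\mu_\gamma\mu_\delta = m$. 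The left-hand side is $\sum_m d_m \, m$ where $d_m$ counts $\pi$-$uv$-ambles with corresponding monomial $m$. Lemma~\ref{lemma_splitpath} is precisely the bijection matching these two indexing sets monomial-by-monomial, so $c_m = d_m$ for every $m$ and the two sides agree as formal sums of monomials, hence as polynomial functions.

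The one subtlety I would flag, rather than a genuine obstacle, is bookkeeping about where the convolution sum really lives: a priori the product $\rho(u)\rho(v)$ collects contributions indexed by all ways to write $\pi$ as a concatenation of a path appearing in $\rho(u)$ and a path appearing in $\rho(v)$, with undefined (non-coterminal) concatenations discarded, exactly as in the convolution formula from Section~\ref{sec_semiringsquivers}. I would make explicit that a nonzero contribution forces $\alpha$ and $\beta$ to be genuinely concatenable with $\alpha\beta = \pi$, so that the convolution sum over the semigroup $\Gamma_{n,\Sigma}^*$ of paths matches exactly the sum over factorisations of $\pi$ used above; this is where coterminality is silently enforced. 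Once that correspondence is in place, the argument closes without any further computation.
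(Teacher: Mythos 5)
Your proposal is correct and follows essentially the same route as the paper's proof: the empty-word case is handled identically, and multiplicativity is reduced to the coefficient identity $f_\pi^{uv} = \sum_{\pi = \alpha\beta} f_\alpha^u f_\beta^v$, which is then verified as an equality of formal polynomials by matching monomials via the bijection of Lemma~\ref{lemma_splitpath}. The extra remark about coterminality in the convolution sum is a harmless elaboration of what the paper's definition of the quiver algebra product already enforces.
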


\begin{proof}
First let $\epsilon$ denote the empty word (which is the identity element in $\Sigma^*$) and $\pi$ be a path in $\Gamma_{n,\Sigma}$.
Notice that $\pi$-$\epsilon$-ambles exist only in the case that $\pi$ is the empty path at some vertex $v$, in which case the unique $\pi$-$\epsilon$-amble is itself the empty path at $v$. It follows that $f_\pi^\epsilon$ is  $1$ if $\pi$ is an empty path and $0$ otherwise, so $\rho(\epsilon)$ by definition is the sum of the empty paths in $\Gamma_{n,\Sigma}$, which is the identity element of the quiver algebra $\ol{\S[X_{n,\Sigma}]} \Gamma_{n,\Sigma}$.

Now let $u, v \in \Sigma^*$ and consider a path $\pi$ in $\Gamma_{n, \Sigma}$. We must show that the coefficient of $\pi$
in $\rho(uv)$ (which by definition is $f_\pi^{uv}$) is the same as the coefficient of $\pi$ in $\rho(u) \rho(v)$. By definition of the multiplication in the quiver algebra, the latter is the sum over all factorisations $\pi = \alpha \beta$ of the coefficient of $\alpha$ in $\rho(u)$ (which by
definition is $f_{\alpha}^u$) and the coefficient of $\beta$ in $\rho(v)$ (which by definition is $f_{\beta}^v)$. Thus, it will suffice to show that
\begin{equation}\label{eq}
f_{\pi}^{uv} \ = \ \sum_{\pi = \alpha \beta} f_{\alpha}^u f_{\beta}^v
\end{equation}
as functions in $\ol{\S[X_{n,\Sigma}]} \Gamma_{n,\Sigma}$. In fact we shall show the sides of $\eqref{eq}$ are equal even as formal polynomials
in $\S[X_{n,\Sigma}] \Gamma_{n,\Sigma}$.

Consider the equation $\eqref{eq}$ with both sides expanded using the definitions of $f_{\pi}^{uv}$, $f_{\alpha}^u$ and $f_{\beta}^v$.
Consider a monomial $m$ in $\S [ X_{n,\Gamma}]$. Each occurrence of $m$ as a term on the left-hand-side of \eqref{eq} comes from a $\pi$-$uv$-amble with corresponding monomial $m$. Each occurrence of $m$ on the right-hand-side
corresponds to a factorisation $\pi = \alpha \beta$, an $\alpha$-$u$-amble $\gamma$ and a $\beta$-$v$-amble $\delta$ such that
$\mu_\gamma \mu_\delta = m$. By Lemma~\ref{lemma_splitpath}, it follows that $m$ occurs as a term the same (possibly $0$) number
of times on each side. Since this holds for all monomials $m$, we deduce that the left-hand-side and right-hand-side are the same
formal polynomial, and therefore in particular represent the same function.
\end{proof}

We are now ready to prove the main theorem of this section.

\begin{theorem}\label{thm_freerep1}
Let $\S$ be a commutative semiring with $0$ and $1$, let $n$ be a natural number and $\Sigma$ a (not necessarily finite)
alphabet. Let $\rho$ be defined as above. Then $\rho(\Sigma^*)$ [respectively, $\rho(\Sigma^+)$] is a multiplicative submonoid
[subsemigroup] of $\ol{\S[X_{n,\Sigma}]} \Gamma_{n,\Sigma}$ which is free of
rank $|\Sigma|$ (freely generated by the elements $\rho(\sigma)$ for $\sigma \in \Sigma$) in the monoid
variety [semigroup variety] generated by $UT_n(\S)$.
\end{theorem}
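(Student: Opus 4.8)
The plan is to show that $\rho(\Sigma^*)$ is free in the monoid variety generated by $UT_n(\S)$ by verifying the universal property of free objects directly. Recall that the free object of rank $|\Sigma|$ in a variety $\mathcal{V}$ is the quotient of the free monoid $\Sigma^*$ by the fully invariant congruence consisting of all pairs $(u,v)$ such that the identity $u=v$ holds throughout $\mathcal{V}$ (equivalently, holds in every generating object, so here in $UT_n(\S)$). By Lemma~\ref{lemma_morphism}, $\rho$ is a monoid morphism, so $\rho(\Sigma^*)$ is certainly a multiplicative submonoid of the quiver algebra, generated by the elements $\rho(\sigma)$. The task is then to identify its defining congruence with exactly this fully invariant congruence.

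First I would argue that $\rho(\Sigma^*)$ lies in the variety generated by $UT_n(\S)$. This requires checking that $\rho(\Sigma^*)$ satisfies every identity that $UT_n(\S)$ satisfies. Here the engine is Theorem~\ref{thm_identities}: for words $u,v \in \Sigma^*$, the identity $u=v$ holds in $UT_n(\S)$ if and only if $\rho(u)=\rho(v)$. More generally, to see that an arbitrary identity $p(y_1,\dots,y_k)=q(y_1,\dots,y_k)$ holding in $UT_n(\S)$ also holds in $\rho(\Sigma^*)$, I would take any substitution of the variables $y_i$ by elements $\rho(w_i)$ of $\rho(\Sigma^*)$; since $\rho$ is a morphism, evaluating $p$ and $q$ gives $\rho(\hat p)$ and $\rho(\hat q)$ where $\hat p, \hat q \in \Sigma^*$ are obtained by the corresponding substitution into the free monoid, and the identity $\hat p = \hat q$ is a consequence of $p=q$ holding in $UT_n(\S)$, hence holds in $UT_n(\S)$, so by Theorem~\ref{thm_identities} we get $\rho(\hat p)=\rho(\hat q)$. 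Thus $\rho(\Sigma^*)$ satisfies $p=q$, and since this holds for all identities of $UT_n(\S)$, membership in the variety follows.

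Next I would establish freeness. Let $F$ denote the free $\Sigma$-generated object in the variety, with canonical morphism $\func{\pi}{\Sigma^*}{F}$ sending $\sigma$ to its generator; its kernel congruence is precisely the set of pairs $(u,v)$ with $u=v$ an identity of $UT_n(\S)$. Since $\rho(\Sigma^*)$ lies in the variety and is generated by the $\rho(\sigma)$, the universal property of $F$ yields a surjective morphism $\func{\psi}{F}{\rho(\Sigma^*)}$ with $\psi \circ \pi = \rho$. To show $\psi$ is an isomorphism it suffices to show it is injective, i.e. that $\rho$ does not identify more pairs of words than $\pi$ does. But if $\rho(u)=\rho(v)$, then by Theorem~\ref{thm_identities} the identity $u=v$ holds in $UT_n(\S)$, so $(u,v)$ lies in the kernel of $\pi$, whence $\pi(u)=\pi(v)$. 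Thus $\ker\rho = \ker\pi$, and $\psi$ is a bijection. It follows that $\rho(\Sigma^*) \cong F$ is free of rank $|\Sigma|$, freely generated by the $\rho(\sigma)$.

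The main obstacle, and the step deserving the most care, is the membership argument: verifying that $\rho(\Sigma^*)$ satisfies \emph{all} identities of $UT_n(\S)$, not merely those of the special form $u=v$ that Theorem~\ref{thm_identities} addresses directly. The subtlety is that a generic identity $p=q$ has abstract variables $y_i$ which must be instantiated by words $w_i$, and one must track carefully that substitution commutes with the morphism $\rho$ (which is exactly where Lemma~\ref{lemma_morphism} is used) and that the resulting word identity $\hat p = \hat q$ is genuinely a consequence of $p=q$ in $UT_n(\S)$. Once that bookkeeping is handled, everything else reduces to the standard universal-algebra characterisation of free objects via congruences, and the semigroup case $\rho(\Sigma^+)$ follows by the identical argument with $\Sigma^+$ in place of $\Sigma^*$ and semigroup identities in place of monoid identities.
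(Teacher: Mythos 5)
Your proposal is correct and follows essentially the same route as the paper: both the membership step and the freeness step rest on Lemma~\ref{lemma_morphism} together with the equivalence from Theorem~\ref{thm_identities} ($\rho(u)=\rho(v)$ if and only if the identity $u=v$ holds in $UT_n(\S)$), first used under arbitrary substitutions from $\rho(\Sigma^*)$ to verify all identities of $UT_n(\S)$, and then to identify $\ker\rho$ with the defining congruence. The only cosmetic difference is that you conclude freeness by matching $\ker\rho$ against the fully invariant congruence presenting the relatively free object, whereas the paper verifies the universal property directly by extending an arbitrary map $\rho(\Sigma)\to M$.
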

\begin{proof}
It follows from Lemma~\ref{lemma_morphism} that $\rho(\Sigma^*)$ is a submonoid generated by the elements $\rho(\sigma)$
for $\sigma \in \Sigma$. Moreover, it is easily seen that the identity element in a quiver algebra is multiplicatively indecomposable,
from which it follows easily that $\rho(\Sigma^+)$ is a subsemigroup. It remains to show that this monoid [semigroup] lies in the
given monoid variety [semigroup variety], and that it has the universal property defining a free object in that variety.

By Birkhoff's theorem \cite{Birkhoff35}, to show that $\rho(\Sigma^*)$ lies in the given monoid variety, it suffices show that it satisfies all the identities which hold in $UT_n(\S)$. Suppose, then, that a monoid identity $u = v$ holds in $UT_n(\S)$.  We must show that that the identity holds for every
assignment of values in $\rho(\Sigma^*)$ to the variables (not just the assignment given by $\rho$ itself), that is, that for every morphism
$g : \Sigma^* \to \rho(\Sigma^*)$ we have $g(u) = g(v)$. Suppose then that $g$ is such a map. For each $\sigma \in \Sigma$, since
$g(\sigma)$ lies in $\rho(\Sigma^*)$ we may choose $\hat\sigma \in \Sigma^*$ such that $g(\sigma) = \rho(\hat\sigma)$. Since $\Sigma^*$
is a free monoid, the map
$\sigma \to \hat\sigma$ extends to an endomorphism of $\Sigma^*$, taking each $w$ to the word $\hat{w}$ obtained by substituting
$\hat\sigma$ for each letter $\sigma$. Since $g$ (by definition) and $\rho$ (by Lemma~\ref{lemma_morphism}) are both morphisms we have $g(w) = \rho(\hat{w})$ for all words $w$.
Notice that the identity $\hat{u} = \hat{v}$ holds in $UT_n(\S)$ (because it is obtained from $u=v$ by applying a consistent substitution for the letters). Hence, by Theorem~\ref{thm_identities}, $f_{\pi}^{\hat{u}} = f_{\pi}^{\hat{v}}$ for all paths
$\pi$ in $\Gamma_{n,\Sigma}$, which by the definition of $\rho$ means that $\rho(\hat{u}) = \rho(\hat{v})$. But now we have
$$g(u) \ = \ \rho(\hat{u}) \ = \ \rho(\hat{v}) \ = \ g(v).$$
as required.

Since every semigroup identity satisfied by $UT_n(\S)$ is also a monoid identity satisfied by $UT_n(\S)$, and since $\rho(\Sigma^+)$ is a subsemigroup of $\rho(\Sigma^*)$, it follows also that $\rho(\Sigma^+)$ lies in the semigroup variety generated by $UT_n(\S)$.

Next we show that $\rho(\Sigma^*)$ has the universal property of a free object (freely generated by $\rho(\Sigma)$) in the monoid variety generated by $UT_n(\S)$. Let $M$ be a monoid in the given variety and $f : \rho(\Sigma) \to M$ a function. We must show that $f$
extends uniquely to a monoid morphism from $\rho(\Sigma^*)$ to $M$.
Since $\Sigma^*$ is an (absolutely) free monoid on $\Sigma$, there is a (unique) monoid morphism $g : \Sigma^* \to M$ satisfying
$g(\sigma) = f(\rho(\sigma))$ for all $\sigma \in \Sigma$. Now if $\rho(u) = \rho(v)$ then by Theorem~\ref{thm_identities} the identity
$u=v$ is satisfied in $UT_n(\S)$ and therefore also in $M$. It follows that there is a well-defined morphism extending $f$ to $\rho(\Sigma^*)$:
$$\rho(\Sigma^*) \to M, \ \ \rho(w) \mapsto g(w).$$
That this is the unique such extension follows from the fact that $\rho(\Sigma^*)$ is generated by $\rho(\Sigma)$.

Finally, a near-identical argument (substituting ``semigroup'' for ``monoid'' and ``$\Sigma^+$'' for ``$\Sigma^*$'') shows that $\rho(\Sigma^+)$
is a free object in the semigroup variety generated by $UT_n(\S)$.
\end{proof}

\section{Variable Reduction and the $2 \times 2$ Case}\label{sec_reduction}

In this section we employ an idea, the essence of which is implicit in \cite{Daviaud18}, to slightly reduce the number of variables required in the polynomial semiring over which we construct the quiver algebra in which we represent our semigroups of interest. Specifically, we show that the free object in the semigroup variety generated by $UT_n(\S)$ arises as a multiplicative subsemigroup of
$\ol{\S[X_{n-1,\Sigma}]} \Gamma_{n,\Sigma}$ (as opposed to $\ol{\S[X_{n,\Sigma}]} \Gamma_{n,\Sigma}$ as given by Theorem~\ref{thm_freerep1}).
For general $n$ this reduction is probably of limited use, but in the case $n=2$ it significantly simplifies things and yields representations of the free objects in the relevant varieties inside semidirect products of commutative monoids. In particular, this leads to a description of each free object in the bicyclic variety inside a semidirect product of a commutative monoid acting on a semilattice.

Let $\delta : \ol{\S[X_{n,\Sigma}]} \to \ol{\S[X_{n-1,\Sigma}]}$ be the partial evaluation map which evaluates $\sigma_n$ to $1 \in \S$ for each $\sigma \in \Sigma$. We define a map of quiver algebras
$$\lambda : \ol{\S[X_{n,\Sigma}]} \Gamma_{n,\Sigma} \to \ol{\S[X_{n-1,\Sigma}]} \Gamma_{n,\Sigma}, \ \ \textrm{ by } (\lambda(f))(\pi)= \delta(f(\pi)).$$
In other words, $\lambda$ applies the partial evaluation map $\delta$ to each coefficient. It is easy to see that $\delta$ is a semiring morphism,
and from this it is straightforward to deduce that $\lambda$ is a semiring morphism (and in particular, a morphism of multiplicative monoids).

\begin{example}\label{example_lambdarho}
Continuing from Example~\ref{example_lambdarho}, with $n=3$ and $a, b \in \Sigma$:
$$\lambda(\rho(a)) \ = \ a_1 \langle 1 \rangle + a_2 \langle 2 \rangle + \p{3} + \langle 1 \xrightarrow{a} 2 \rangle + \langle 2 \xrightarrow{a} 3 \rangle + \langle 1 \xrightarrow{a} 3 \rangle,$$
$$\lambda(\rho(b)) \ = \ b_1 \p{1} + b_2 \p{2} + \p{3} + \p{1 \xrightarrow{b} 2} + \p{2 \xrightarrow{b} 3} + \p{1 \xrightarrow{b} 3},$$
\begin{align*}
\lambda(\rho(ab)) \ = \ &a_1 b_1 \p{1} + a_2 b_2 \p{2} + \p{3} \\
&+ b_2 \p{1 \xrightarrow{a} 2} + \p{1 \xrightarrow{a} 3} + \p{2 \xrightarrow{a} 3} \\
&+ a_1 \p{1 \xrightarrow{b} 2} + a_1 \p{1 \xrightarrow{b} 3} + a_2 \p{2 \xrightarrow{b} 3} \\
&+ \p{1 \xrightarrow{a} 2 \xrightarrow{b} 3}.
\end{align*}
and
\begin{align*}
\lambda(\rho(aa)) \ = \ &a_1^2 \p{1} + a_2^2 \p{2} + \p{3} \\
&+ (a_1 + a_2) \p{1 \xrightarrow{a} 2} + a_1 \p{1 \xrightarrow{a} 3} + a_2 \p{2 \xrightarrow{a} 3} \\
&+ \p{1 \xrightarrow{a} 2 \xrightarrow{a} 3}.
\end{align*}
\end{example}

\begin{theorem}\label{thm_freerep2}
Let $n \geq 2$ and 
define $\rho$ as in the previous section. Then $\lambda(\rho(\Sigma^*))$ [respectively $\lambda(\rho(\Sigma^+))]$ under the 
multiplication in $\ol{\S[X_{n-1,\Sigma}]} \Gamma_{n,\Sigma}$ is a submonoid [subsemigroup] which is free of
rank $|\Sigma|$ (freely generated by the elements $\lambda(\rho(\sigma))$ for $\sigma \in \Sigma$) in the monoid variety [semigroup
variety] generated by $UT_n(\S)$.
\end{theorem}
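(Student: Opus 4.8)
The plan is to follow the template of the proof of Theorem~\ref{thm_freerep1} as closely as possible, isolating the single genuinely new ingredient. Since $\delta$ is a semiring morphism, so is $\lambda$, and hence $\lambda \circ \rho$ is a morphism from $\Sigma^*$ to the multiplicative monoid of $\ol{\S[X_{n-1,\Sigma}]} \Gamma_{n,\Sigma}$ (using Lemma~\ref{lemma_morphism}). Thus $\lambda(\rho(\Sigma^*))$ is a submonoid generated by the elements $\lambda(\rho(\sigma))$, and $\lambda(\rho(\Sigma^+))$ a subsemigroup (again using that the identity of a quiver algebra is multiplicatively indecomposable). That $\lambda(\rho(\Sigma^*))$ lies in the monoid variety generated by $UT_n(\S)$ is immediate: it is the image of $\rho(\Sigma^*)$ under the monoid morphism $\lambda$, and $\rho(\Sigma^*)$ lies in this variety by Theorem~\ref{thm_freerep1}, so closure of varieties under homomorphic images gives the claim (and likewise for the semigroup case).

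It therefore remains to establish the universal property, and for this the key reduction is to show that $\lambda \circ \rho$ has exactly the same kernel as $\rho$: that $\lambda(\rho(u)) = \lambda(\rho(v))$ if and only if $\rho(u) = \rho(v)$, equivalently (by Theorem~\ref{thm_identities}) if and only if $u = v$ holds in $UT_n(\S)$. Granting this, the verification of both membership in the variety and the universal property goes through essentially verbatim as in Theorem~\ref{thm_freerep1}, with $\lambda \circ \rho$ in place of $\rho$. The forward implication is trivial since $\lambda$ is a function, so everything comes down to injectivity of $\lambda$ on $\rho(\Sigma^*)$: the statement that $\lambda(\rho(u)) = \lambda(\rho(v))$ forces $f_\pi^u = f_\pi^v$ for every path $\pi$ in $\Gamma_{n,\Sigma}$.

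To prove this injectivity I would first dispose of all but the longest paths. By Remark~\ref{remark_relabel} it suffices to test one representative path for each label; a label of length $\ell \le n-2$ is carried by a path through $\ell+1 \le n-1$ vertices, which can be chosen among $\{1, \dots, n-1\}$, so for every such label I may pick a representative avoiding the top vertex $n$. As the loops of an amble sit only at vertices of $\pi$, the corresponding $f_\pi^w$ then involves no variable $\sigma_n$, whence $\delta$ is injective on such coefficients and already separates $f_\pi^u$ from $f_\pi^v$ there. The only labels not covered this way have the maximal length $n-1$, whose unique carrying path is $\p{1 \to 2 \to \cdots \to n}$ and necessarily ends at $n$. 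For this path the variables $\sigma_n$ enter $f_\pi^w$ only through the terminal loop-segment at vertex $n$; by the homogeneity recorded in Remark~\ref{remark_homogeneous} each monomial of $f_\pi^w$ has total $\sigma$-degree $d_\sigma = |w|_\sigma - |\,\mathrm{label}(\pi)\,|_\sigma$, with $|w|_\sigma$ encoded in the ($\sigma_n$-free) empty-path coefficient $f_{\p{1}}^w$ of Remark~\ref{remark_emptypath}. Consequently the $\sigma_n$-exponent of each monomial equals $d_\sigma$ minus its degree in the remaining variables, so $f_\pi^w$ ought to be the unique ``rehomogenisation'' of $\delta(f_\pi^w)$ to multidegree $(d_\sigma)$, forcing $f_\pi^u = f_\pi^v$.

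The hard part will be making this last step rigorous at the level of polynomial \emph{functions} rather than formal polynomials: the rehomogenisation recovers the formal polynomial underlying $f_\pi^w$, but I must verify that this recovery descends through the passage to evaluation functions in $\ol{\S[X_{n,\Sigma}]}$ — in effect, that a multihomogeneous function of known multidegree is determined by its restriction to the slice $\sigma_n = 1$. This is the one place where the special arithmetic of $\S$ can intervene, and I expect it to be the crux of the whole argument. The remark that the essence of the trick is implicit in \cite{Daviaud18} suggests handling it through the normalisation by which, when testing an identity $u=v$ (so $|u|_\sigma = |v|_\sigma$) in $UT_n(\S)$, one divides the products $M_u$ and $M_v$ by their common $(n,n)$-entry $\prod_\sigma (M_\sigma^{(n,n)})^{|w|_\sigma}$, thereby rescaling the bottom diagonal entries to $1$; I would aim to show this normalisation is legitimate in the relevant function semiring and precisely realises the effect of $\delta$.
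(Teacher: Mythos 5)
Your skeleton is the paper's: reduce everything, via Theorem~\ref{thm_freerep1} and the fact that $\lambda$ is a morphism of multiplicative monoids, to the injectivity of $\lambda$ on $\rho(\Sigma^*)$, and then argue that each $f_\pi^w$ can be recovered from $\delta(f_\pi^w)$ by reinserting the deleted variables $\sigma_n$, the number to be reinserted into each term being determined by the multidegree data of Remark~\ref{remark_homogeneous} together with the coefficient $f_{\p{1}}^w$ of the empty path at $1$, which is untouched by $\delta$ and records the letter multiplicities of $w$ (Remark~\ref{remark_emptypath}). The paper applies this reinsertion uniformly to every path, so your preliminary disposal of the shorter paths via Remark~\ref{remark_relabel} is unnecessary, though correct; up to the final step your proposal and the paper's proof coincide.

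The step you defer is, however, a genuine gap, and neither of the routes you sketch closes it in the stated generality. The normalisation of \cite{Daviaud18} --- dividing through by the common $(n,n)$-entry --- needs multiplicative inverses and so is unavailable over an arbitrary commutative semiring. And the rehomogenisation, which is canonical for \emph{formal} polynomials, does not in general descend to evaluation functions: your worry here is not a technicality but the crux. Concretely, take $\S = \B$ the Boolean semifield, $n=2$ and $\Sigma = \lbrace a \rbrace$. Then $\rho(a) = a_1\p{1} + a_2\p{2} + \p{1 \xrightarrow{a} 2}$ and $\rho(aa) = a_1^2\p{1} + a_2^2\p{2} + (a_1+a_2)\p{1 \xrightarrow{a} 2}$, so $\lambda(\rho(a))$ and $\lambda(\rho(aa))$ have coefficients $a_1, 1, 1$ and $a_1^2, 1, a_1+1$ respectively; over $\B$ the functions $\ol{a_1}$ and $\ol{a_1^2}$ coincide, as do $\ol{a_1+1}$ and $\ol{1}$, so $\lambda(\rho(a)) = \lambda(\rho(aa))$, while $\rho(a) \neq \rho(aa)$ because $\ol{1} \neq \ol{a_1+a_2}$ (equivalently, $UT_2(\B)$ does not satisfy $x = x^2$). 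Thus $\lambda$ is not injective on $\rho(\Sigma^*)$ for every commutative semiring, and the element $\lambda(\rho(a))$ is idempotent although the rank-one free object in the variety generated by $UT_2(\B)$ has a non-idempotent generator. The paper's own proof carries out the reinsertion ``term by term'' on the defining formal polynomials and does not address the passage to functions, so the obstruction you have put your finger on is real; any complete argument must either impose a hypothesis on $\S$ under which multihomogeneous functions are determined by their restriction to the slice $\sigma_n = 1$ (as holds, for instance, over $\T$), or abandon the reduction to $X_{n-1,\Sigma}$ altogether.
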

\begin{proof}
In view of Theorem~\ref{thm_freerep1} and the fact that $\lambda$ is a morphism of monoids, it suffices to show that $\lambda$ is injective when restricted to the image $\rho(\Sigma^*)$ of $\rho$. We shall see that this follows from Remark~\ref{remark_homogeneous}.

Indeed, suppose $w \in \Sigma^*$ and consider $\rho(w)$. By definition the coefficient in $\rho(w)$ of each path $\pi$ is $f_\pi^w$. The map $\lambda$ applies the map $\delta$ to each coefficient, that is, removes variables $\sigma_n$ for different $\sigma$. By looking at $\lambda(\rho(w))$
we can therefore recover $\delta(f_\pi^w)$ for each path $\pi$. To show the map is injective, it will suffice to show that we can recover the polynomials
$f_\pi^w$ from their images $\delta(f_\pi^w)$.

Consider the empty path $\p{1}$ at the vertex $1$. Since this path does not visit the vertex $n$, the polynomial $f_{\p{1}}^w$ contains no variables of the form
$\sigma_n$, so $f_{\p{1}}^w = \delta(f_{\p{1}}^w)$. 

By Remark~\ref{remark_homogeneous}, knowing $f_{\p{1}}^w$ is enough to tell us for each $\sigma$ how many times each letter occurs in $w$, and hence (using Remark~\ref{remark_homogeneous} again) how many times variables of the form $\sigma_i$ (for different $i$) occur in each term of $f_\pi^w$. Since we know
$\delta(f_\pi^w)$, and know that it has been obtained from $f_\pi^w$ by removing only variables of
the form $\sigma_n$, there is a unique way to recover $f_\pi^w$ by inserting into each term the appropriate number of $\sigma_n$s for each $\sigma$.
\end{proof}

The above trick is particularly useful in the case $n=2$, where it allows us to reduce consideration
to an alphabet $X_{1,\Sigma}$ in one-to-one correspondence with $\Sigma$ itself. This will allow us to 
exhibit an alternative representation within a semidirect product of commutative semigroups. For notational
convenience, for the rest of this section we shall identify
each variable $\sigma_1 \in X_{1,\Sigma}$ with the symbol $\sigma$ itself, so that $X_{1,\Sigma}  = \Sigma$.

Let $A$ denote the set of monomial functions in $\ol{\S[\Sigma]}$, considered as a semigroup under multiplication. Let
$B$ be the whole of $\ol{\S[\Sigma]}$ considered as a semigroup under addition, and let $B^\Sigma$ be the direct
product of $|\Sigma|$ copies of $B$. Notice that $A$ and $B$ are both commutative semigroups. In the case that $\S$ is
an idempotent semiring, $B$ is a commutative semigroup
of idempotents (a \textit{semilattice} in the usual terminology of semigroup theory).
There is a natural action of $A$ on the left (say) of $B = \ol{\S[\Sigma]}$ by the multiplication in $\ol{\S[\Sigma]}$, which extends componentwise to an
action of $A$ on $B^\Sigma$.

Let $G$ be the obvious semidirect product of $B^\Sigma$ with $A$, that is, the set $B^\Sigma \times A$ with
multiplication given by
$$(f,b) (g,c) = (f+bg,bc).$$
Then $G$ is a monoid with identity element $(0, 1)$ where $0$ here represents the identity element of the additive monoid $B^\Sigma$,
which is the constant function taking everything in $\Sigma$ to the constant zero function in $\ol{\S[\Sigma]}$.
Define a map from $\alpha : \ol{\S[\Sigma]} \Gamma_{2,\Sigma} \to G$ by $\alpha(p) = (b_p,a_p)$ where
\begin{itemize}
\item $b_p(\sigma) = p(\p{1 \xrightarrow{\sigma} 2})$ (the coefficient in $p$ of the unique path from $1$ to $2$ labelled $\sigma$ in $\Gamma_{2,\Sigma}$); and
\item $a_p = p(\p{1})$ (the coefficient in $p$ of the empty path at the vertex $1$). By Remark~\ref{remark_emptypath},
this coefficient is a monomial in the variables $\sigma_1$ for $\sigma \in \Sigma$, which here we are identifying
with the symbols in $\Sigma$ itself, so that $a$ does indeed lie in $A$ as required.
\end{itemize}
We shall continue to use the notation $b_p$ and $a_p$ for the first and second components respectively of $\alpha(p)$.

\begin{example}\label{example_alphalambdarho}
Calculating as in Examples ~\ref{example_rho} and \ref{example_lambdarho} but with $n=2$ and using
the convention that each $\sigma \in \Sigma$ is identified with the variable $\sigma_1$ we have
$$\lambda(\rho(a)) \ = \ a \langle 1 \rangle + \langle 2 \rangle + \langle 1 \xrightarrow{a} 2 \rangle$$
$$\lambda(\rho(b)) \ = \ b \p{1} + \p{2} + \p{1 \xrightarrow{b} 2},$$
$$\lambda(\rho(ab)) \ = \ a b \p{1} + \p{2} + \p{1 \xrightarrow{a} 2} + a \p{1 \xrightarrow{b} 2}$$
$$\lambda(\rho(aa)) \ = \ a^2 \p{1} + \p{2} +  (a+1) \p{1 \xrightarrow{a} 2}$$
If, for ease of notation, we write a map $f \in B^\Sigma = B^{\lbrace a, b \rbrace}$ as a pair \\
$(a \mapsto f(a), b \mapsto f(b))$ we have 
$$\begin{array}{llll}
\alpha(\lambda(\rho(a))) \ &= \ ((a \mapsto 1, &b \mapsto 0), \ &a) \\
\alpha(\lambda(\rho(b))) \ &= \ ((a \mapsto 0, &b \mapsto 1), \ &b) \\
\alpha(\lambda(\rho(ab))) \ &= \ ((a \mapsto 1, &b \mapsto a), \ &ab) \\
\alpha(\lambda(\rho(aa))) \ &= \ ((a \mapsto a+1, &b \mapsto 0), \ &aa)
\end{array}$$
\end{example}

\begin{lemma}\label{lemma_morphismtosemidirect}
The map $\alpha$ is a monoid morphism from the image $\lambda(\rho(\Sigma^*))$ in $\ol{\S[\Sigma]} \Gamma_{2,\Sigma}$ to the semidirect
product $G$.
\end{lemma}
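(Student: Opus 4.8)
The plan is to verify the morphism property by direct comparison: compute the product of two elements in the quiver algebra $\ol{\S[\Sigma]}\Gamma_{2,\Sigma}$, compute the corresponding product in the semidirect product $G$, and check that the two agree after applying $\alpha$. First I would pin down the structure of the quiver algebra. Since $\Gamma_{2,\Sigma}$ is the Kronecker quiver with $|\Sigma|$ arrows from vertex $1$ to vertex $2$ and no loops, its only paths are the two empty paths $\p{1}, \p{2}$ and the single-edge paths $\p{1 \xrightarrow{\sigma} 2}$; hence a general element $p$ is determined by the coefficients $a_p = p(\p{1})$, $c_p = p(\p{2})$ and $b_p(\sigma) = p(\p{1 \xrightarrow{\sigma} 2})$ for $\sigma \in \Sigma$. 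Working out the products of the basis paths—the only nonzero ones being $\p{1}\p{1} = \p{1}$, $\p{2}\p{2} = \p{2}$, $\p{1}\,\p{1 \xrightarrow{\sigma} 2} = \p{1 \xrightarrow{\sigma} 2}$ and $\p{1 \xrightarrow{\sigma} 2}\,\p{2} = \p{1 \xrightarrow{\sigma} 2}$—I obtain the product formula $a_{pq} = a_p a_q$, $c_{pq} = c_p c_q$ and $b_{pq}(\sigma) = a_p\, b_q(\sigma) + b_p(\sigma)\, c_q$.

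Next I would compute the product in $G$. From $\alpha(p) = (b_p, a_p)$, $\alpha(q) = (b_q, a_q)$ and the semidirect-product rule $(f,b)(g,c) = (f+bg, bc)$, the product $\alpha(p)\alpha(q)$ has $A$-component $a_p a_q$ and $B^\Sigma$-component $\sigma \mapsto b_p(\sigma) + a_p\, b_q(\sigma)$. Comparing with the formula above, the $A$-components always agree, while the $B^\Sigma$-components agree precisely when $b_p(\sigma)\, c_q = b_p(\sigma)$ for all $\sigma$—that is, when $c_q = 1$.

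This is exactly where the restriction to the image $\lambda(\rho(\Sigma^*))$ enters, and I expect it to be the crux: on the full quiver algebra $\alpha$ is \emph{not} multiplicative, because the spurious factor $c_q$ survives. The key observation, which I would isolate as a separate claim, is that every element of $\lambda(\rho(\Sigma^*))$ has $\p{2}$-coefficient equal to $1$. Indeed, for $q = \lambda(\rho(v))$ we have $c_q = \delta(f_{\p{2}}^v)$, and by Remark~\ref{remark_emptypath} the polynomial $f_{\p{2}}^v$ is the monomial abelianising $v$ in the variables $\sigma_2$; since $\delta$ evaluates every $\sigma_2$ to $1 \in \S$, this monomial collapses to $1$. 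With $c_q = 1$ established on the image, the two products coincide, giving multiplicativity. It then remains to check that $\alpha$ preserves the identity—$\lambda(\rho(\epsilon)) = \p{1} + \p{2}$ has $a = 1$ and $b \equiv 0$, so $\alpha$ sends it to $(0,1)$, the identity of $G$—and that $\alpha$ genuinely lands in $G$, i.e. that $a_p$ is a monomial function (hence in $A$), which is again immediate from Remark~\ref{remark_emptypath} since $\delta$ fixes the variables $\sigma_1$. All of this beyond the $c_q = 1$ claim is routine bookkeeping.
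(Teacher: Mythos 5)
Your proof is correct and follows essentially the same route as the paper's: compute the convolution product path-by-path in the Kronecker quiver, match it against the semidirect product law, and observe that the only possible discrepancy (the coefficient of $\p{2}$ in $q$) disappears because Remark~\ref{remark_emptypath} together with the definition of $\lambda$ forces that coefficient to be $1$ on the image $\lambda(\rho(\Sigma^*))$. Your explicit isolation of the failure of multiplicativity on the full quiver algebra is a nice touch but does not change the argument.
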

\begin{proof}
First, since $\lambda$ and $\rho$ are monoid morphisms, the identity element of $\lambda(\rho(\Sigma^*))$ is the identity
element of $\ol{\S[\Sigma]} \Gamma_{2,\Sigma}$, which is $\p{1} + \p{2}$. Now the definition of $\alpha$ gives 
$\alpha(\p{1} + \p{2}) = (0, 1)$ which is the identity element in $G$.

Now let $p, q \in \lambda(\rho(\Sigma^+)) \subseteq \ol{\S[\Sigma]} \Gamma_{2,\Sigma}$, say $p = \lambda(\rho(u))$ and $q = \lambda(\rho(v))$. We aim to
show that $\alpha(pq) = \alpha(p) \alpha(q)$, which by definition of $\alpha$ and the multiplication in $G$ means exactly that
$a_{pq} = a_p a_q$ in $A$ and that $b_{pq}(\sigma) = b_p(\sigma) + a_p b_q(\sigma)$ for all $\sigma \in \Sigma$.

Firstly, using the definition of multiplication in $\ol{\S[\Sigma]} \Gamma_{2,\Sigma}$ and the fact that empty paths in a quiver decompose
only as their own squares we have 
$$a_{pq} \ = \ (pq)(\p{1}) \ = \ p(\p{1}) q(\p{1}) \ = \ a_p a_q.$$

Now for each $\sigma \in \Sigma$, by the definition of $b_{pq}$ and of the multiplication in $\ol{\S[\Sigma]} \Gamma_{2,\Sigma}$ and
that fact that a path of length $1$ in a quiver decomposes only by pre- and post-multiplying with the appropriate empty paths,
$$b_{pq}(\sigma) \ = \ (pq)(\p{1 \xrightarrow{\sigma} 2}) \ = \ p(\p{1}) q(\p{1 \xrightarrow{\sigma} 2}) + p(\p{1 \xrightarrow{\sigma} 2}) q(\p{2}).$$
By Remark~\ref{remark_emptypath} we have that the coefficient of the empty path $\p{2}$ in $\rho(v)$ is a monomial in the variables $\sigma_2$ for different $\sigma$, so by the definition of $\lambda$ we have $q(\p{2}) = 1$.
Also, by definition we have that $p(\p{1}) = a_p$ and $q(1 \xrightarrow{\sigma} 2) = b_q(\sigma)$, so that
$$b_{pq}(\sigma) \ = \ a_p b_q(\sigma) + b_p(\sigma)$$
as required to complete the proof.
\end{proof}

\begin{theorem}\label{thm_freerep3}
Let $\S$ be a commutative semiring and $\Sigma$ a (not necessarily finite) alphabet. Let $G$ be the semidirect product defined above. The
submonoid [subsemigroup] generated by the elements of the form $\alpha(\lambda(\rho(\sigma)))$ for $\sigma \in \Sigma$
is a free object (on the given generating set) in the monoid variety [semigroup variety] generated by $UT_2(\S)$.
\end{theorem}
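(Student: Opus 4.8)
The plan is to leverage Theorem~\ref{thm_freerep2} in the case $n = 2$, which already tells us that $\lambda(\rho(\Sigma^*))$ [respectively $\lambda(\rho(\Sigma^+))$] is free of rank $|\Sigma|$ in the monoid variety [semigroup variety] generated by $UT_2(\S)$, freely generated by the elements $\lambda(\rho(\sigma))$. By Lemma~\ref{lemma_morphismtosemidirect}, $\alpha$ restricts to a monoid morphism on this free object, and since $\alpha \circ \lambda \circ \rho$ is then a composite of morphisms, its image --- namely $\alpha(\lambda(\rho(\Sigma^*)))$ --- is exactly the submonoid generated by the elements $\alpha(\lambda(\rho(\sigma)))$. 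Thus the entire statement will follow once I show that $\alpha$ is \emph{injective} on $\lambda(\rho(\Sigma^*))$: an injective monoid morphism out of a free object of the variety is an isomorphism onto its image which carries the free generating set to the proposed generating set, and freeness is preserved under isomorphism.

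To establish injectivity I would exploit the very simple combinatorics of the quiver $\Gamma_{2,\Sigma}$. Since it is the $|\Sigma|$-arrowed Kronecker quiver, with all loops removed, vertex $2$ is a sink and there are no edges out of it; hence the only paths in $\Gamma_{2,\Sigma}$ are the two empty paths $\p{1}$ and $\p{2}$ together with the length-$1$ edges $\p{1 \xrightarrow{\sigma} 2}$ for $\sigma \in \Sigma$. Consequently any element $p$ of the quiver algebra $\ol{\S[\Sigma]} \Gamma_{2,\Sigma}$ is completely determined by the three pieces of data $p(\p{1})$, $p(\p{2})$, and the coefficients $p(\p{1 \xrightarrow{\sigma} 2})$. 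The map $\alpha$ records precisely $a_p = p(\p{1})$ and $b_p(\sigma) = p(\p{1 \xrightarrow{\sigma} 2})$, so the only coefficient it discards is that of the empty path $\p{2}$.

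The crux of the argument --- and the one step that requires genuine input rather than bookkeeping --- is to see that this discarded coefficient is always the same constant on $\lambda(\rho(\Sigma^*))$. For $p = \lambda(\rho(w))$, by Remark~\ref{remark_emptypath} the coefficient of $\p{2}$ in $\rho(w)$ is the abelianisation of $w$ in the variables $\sigma_2$; applying $\lambda$ evaluates each $\sigma_2 = \sigma_n$ to $1 \in \S$, and so $p(\p{2}) = 1$ for every $w$ (exactly the identity already used inside the proof of Lemma~\ref{lemma_morphismtosemidirect}). Hence the one coordinate forgotten by $\alpha$ is constant across the whole of $\lambda(\rho(\Sigma^*))$, and therefore $p$ is recoverable from $\alpha(p)$, giving injectivity.

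Putting these together, $\alpha$ restricts to an injective monoid morphism from the free object $\lambda(\rho(\Sigma^*))$ onto $\alpha(\lambda(\rho(\Sigma^*)))$, which is consequently free of rank $|\Sigma|$ in the monoid variety generated by $UT_2(\S)$, freely generated by the elements $\alpha(\lambda(\rho(\sigma)))$. The semigroup statement follows by the identical argument with $\Sigma^+$ in place of $\Sigma^*$, using the semigroup half of Theorem~\ref{thm_freerep2} and the fact that $\alpha$ remains injective on the subsemigroup $\lambda(\rho(\Sigma^+))$. I expect the enumeration of paths and the constancy of the $\p{2}$-coefficient to be the only nonroutine points; everything else is transport of freeness along an isomorphism.
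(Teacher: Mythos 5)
Your proposal is correct and follows essentially the same route as the paper's own proof: reduce to injectivity of $\alpha$ on $\lambda(\rho(\Sigma^*))$ via Theorem~\ref{thm_freerep2} and Lemma~\ref{lemma_morphismtosemidirect}, then recover $p$ from $\alpha(p)$ by observing that the only discarded coefficient, that of $\p{2}$, is identically $1$ by Remark~\ref{remark_emptypath} and the definition of $\lambda$. Your explicit enumeration of the paths of $\Gamma_{2,\Sigma}$ is merely a slightly more spelled-out version of the same bookkeeping.
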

\begin{proof}
By Lemma~\ref{lemma_morphismtosemidirect}, the map $\alpha$ is a monoid morphism on $\lambda(\rho(\Sigma^*))$ so the 
submonoid [subsemigroup] of $G$
generated by the given elements is clearly the image of under $\alpha$ of $\lambda(\rho(\Sigma^*))$ [respectively, $\lambda(\rho(\Sigma^+))$].
Thus, by  
Theorem~\ref{thm_freerep2} and Lemma~\ref{lemma_morphismtosemidirect}, it suffices to show that $\alpha$ is
injective on the the set $\lambda(\rho(\Sigma^*))$. Suppose $p \in \ol{\S[X_{1,\Sigma}]} \Gamma_{n,\Sigma}$ lies in this image, and
write $\alpha(p) = (b,a) \in G$. Then:
\begin{itemize}
\item the coefficient in $p$ of each path of the form $\p{1 \xrightarrow{\sigma} 2}$ can be recovered from $\alpha(p)$ as $b(\sigma)$;
\item the coefficient in $p$ of the empty path $\p{1}$  can be recovered from $\alpha(p)$ as $a$; and
\item the coefficient in $p$ of the empty path $\p{2}$ is always $1 \in \S$. Indeed, we have $p = \lambda(\rho(w))$ for some $w \in \Sigma^+$; by Remark~\ref{remark_emptypath} the coefficient $f_{\p{2}}^w$ of $\p{2}$ in $\rho(w)$ contains only variables of the form $\sigma_2$ for $\sigma \in \Sigma$, and so the map $\lambda$ takes this coefficient to $1$.
\end{itemize}
Thus, all the coefficents in $p$, and hence $p$ itself, can be recovered from $\alpha(p)$, so $\alpha$ must be injective on $\lambda(\rho(\Sigma^+))$.
\end{proof}

\begin{remark}
In the proof of injectivity in Theorem~\ref{thm_freerep3} we used the second component $\alpha(p)$ only to recover the coefficient in $p$ of the
empty path $\p{1}$. But in fact this coefficient, and hence the entire of $p$, can be recovered from the coefficient of paths of the form $\p{1 \xrightarrow{\sigma} 2}$ and hence from the first component of $\alpha(p)$. Indeed, it follows from Remark~\ref{remark_emptypath} that the coefficient we seek is (with our new
notation identifying $\sigma_1$ with $\sigma$) simply the abelianisation of the word $w$. Now it is easy to see that the unique highest-degree term in any coefficient of a length-$1$ path in $p$ will arise in the coefficient of $\p{1 \xrightarrow{\sigma} 2}$ where $\sigma$ is the last letter of $w$, and
that this coefficient will be the abelianisation of $w$ without its last letter. Thus, we may recover the coefficient $\p{1}$ by finding the highest
degree term in any coefficient of a path of length $1$, and multiplying this coefficient by the letter labelling the path in question.

It is therefore possible in principle to define the free object on $\Sigma$ in the given variety as a certain subset of
$B^{\Sigma}$ (instead of $B^{\Sigma} \times A$) under an appropriate product operation. However, the definition of
this operation is technically involved, and it is simpler just to use both components.
\end{remark}

\begin{remark} As remarked in the introduction, it was shown in \cite[Theorem~4.1]{Daviaud18} that $UT_2(\T)$ generates the \textit{bicyclic variety}, that is,
the semigroup variety generated by the bicyclic monoid, or equivalently (as a consequence of a result of Scheiblich \cite{Scheiblich71}, as explained in
\cite[Section~7]{Daviaud18}) by the free monogenic
inverse monoid. Thus, Theorem~\ref{thm_freerep3} yields in particular a representation of each free object
in this variety.

Note that the free monogenic inverse monoid is so named because, when
equipped with a natural unary \textit{inverse} operation, it is a free object of rank $1$ in a certain variety (2,1)-algebras; it is
not free (of any rank) in any variety of semigroups or monoids. Indeed, it is easy to show that a free object in any semigroup or
monoid variety either satisfies a torsion identity (an identity of the form $A^i = A^j$ with $i \neq j$) or is generated by indecomposable
elements (that is, elements which cannot be written as a product $xy$ unless either $x$ or $y$ is an identity element). In contrast,
the free monogenic inverse monoid has elements of infinite order, and its only indecomposable element is the identity.
\end{remark}

\begin{remark}
Theorem~\ref{thm_freerep2} shows that for $\S$ an idempotent commutative semiring, every free object in the semigroup variety
generated by $UT_2(\S)$ embeds in a semidirect
product of a commutative monoid acting on a semilattice. One might wonder whether there are any identities
inherently satisfied by such semidirect products (and hence by all upper triangular matrix semigroups over
idempotent commutative semirings), but in fact such a semidirect product can even contain a free subsemigroup.

For example, let $A$ be the free commutative monoid on some alphabet $\Sigma$. For a
word $w \in \Sigma^+$ we write $\ol{w}$ for image of $w$ in $A$ (which is essentially
the abelianisation of $w$).
Let $\mathcal{P}(A)$ be the power set of $A$ considered as a semilattice under the operation of union, and
let $A$ act on $\mathcal{P}(A)$ by
$$aX = \lbrace ax \mid x \in X \rbrace$$
for all $a \in A$ and $X \subseteq A$. Let $H$ be the resulting semidirect product of $A$ and $\mathcal{P}(A)$,
and consider the subsemigroup generated by the elements of the form
$\hat{\sigma} = (\lbrace \overline{\sigma} \rbrace, \overline{\sigma})$
where $\sigma \in \Sigma$. Given a word $w = w_1\dots w_n \in \Sigma^+$ where each $w_i \in \Sigma$ it is readily verified that
$$\hat{w_1} \dots \hat{w_n} \ = \ (\lbrace \ol{w_1}, \ol{w_1 w_2}, \dots, \ol{w_1 \dots w_n} = \ol{w} \rbrace, \ \ol{w})$$
The first component is the set of abelianized prefixes of $w$. Since $w$ can clearly be reconstructed from this set, it
follows that extension of the map $\sigma \mapsto \hat{\sigma}$ to a morphism from $\Sigma^+$ to $H$ is injective, in other
words, its image is isomorphic to a free subsemigroup of rank $|\Sigma|$ inside $H$. 

Another example of this phenomenon is implicit in Example~\ref{example_freeidpt} below, which shows that
upper triangular matrices over idempotent commutative semirings need not satisfy identities.
\end{remark}

\section{Applications}\label{sec_fields}

In this final section, we consider some properties of the free objects and varieties under consideration, which can be deduced
from the descriptions in the preceding sections. First, we consider the question of when $UT_n(\S)$ satisfies any
identities at all or, at the other extreme, when the free objects in the variety it generates contain (absolutely) free subsemigroups. We show that if a commutative semiring $\S$ has the property that distinct formal polynomials in one variable represent distinct functions then $UT_2(\S)$ (and hence also $UT_n(\S)$ and $M_n(\S)$ for all $n \geq 2$) cannot satisfy a semigroup identity. Combined with results of
Okninski and Salwa \cite{Okninski95} this observation allows us to describe exactly for which fields $k$ the semigroup $UT_2(k)$ satisfies an identity or contains a free subsemigroup.

We remark that in what follows we regard a polynomial in fewer than $n$ variables as a (degenerate) example of a polynomial in $n$ variables.

\begin{lemma}\label{lemma_distinctmultivariate}
Let $\S$ be a commutative semiring and $n$ a natural number. Then distinct formal polynomials in one variable represent distinct functions over $\S$ if and only if distinct polynomials in $n$ variables represent distinct functions over $\S$.
\end{lemma}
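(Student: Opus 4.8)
The plan is to prove the substantive implication — that the one-variable property forces the $n$-variable property — by induction on $n$, the reverse implication being essentially immediate. For the reverse direction, a formal polynomial in one variable is a (degenerate) formal polynomial in $n$ variables, and two such induce the same function $\S \to \S$ if and only if they induce the same function $\S^n \to \S$; so if distinct $n$-variable polynomials represent distinct functions, the same holds for one-variable polynomials. I would therefore set up the induction to establish ``one-variable property $\Rightarrow$ $k$-variable property'' for all $k \le n$, with base case $k = 1$ being the hypothesis itself.

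For the inductive step I would use the standard identification $\S[x_1,\dots,x_n] \cong (\S[x_1,\dots,x_{n-1}])[x_n]$, which is valid over an arbitrary semiring, to write any two given formal polynomials as $f = \sum_k f_k x_n^k$ and $g = \sum_k g_k x_n^k$ with coefficients $f_k, g_k \in \S[x_1,\dots,x_{n-1}]$. Assuming $\overline{f} = \overline{g}$ as functions $\S^n \to \S$, the goal is to conclude $f = g$ as formal polynomials, that is, $f_k = g_k$ for every $k$.

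The key move is a slicing argument. Fixing any $\vec a = (a_1,\dots,a_{n-1}) \in \S^{n-1}$ and specialising the first $n-1$ variables, I obtain two one-variable formal polynomials $\sum_k \overline{f_k}(\vec a)\, x_n^k$ and $\sum_k \overline{g_k}(\vec a)\, x_n^k$ over $\S$. Since $\overline{f} = \overline{g}$, these induce the same function $\S \to \S$, so the one-variable hypothesis forces them to be \emph{equal as formal polynomials}; comparing coefficients gives $\overline{f_k}(\vec a) = \overline{g_k}(\vec a)$ for every $k$. As $\vec a$ was arbitrary, $\overline{f_k} = \overline{g_k}$ as functions on $\S^{n-1}$ for each $k$, and the inductive hypothesis then yields $f_k = g_k$ as formal polynomials, whence $f = g$.

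The main point requiring care — rather than a genuine obstacle — is that in a semiring one cannot pass to $f - g$ and argue that a polynomial inducing the zero function must itself be zero, since subtraction is unavailable. The slicing argument is designed precisely to sidestep this: the one-variable hypothesis, phrased as injectivity of evaluation, delivers equality of formal polynomials (and hence coefficient-by-coefficient equality) directly, with no cancellation. The only routine verification is that the isomorphism $\S[x_1,\dots,x_n] \cong (\S[x_1,\dots,x_{n-1}])[x_n]$ is compatible with evaluation, i.e.\ that specialising the first $n-1$ variables and then evaluating in $x_n$ agrees with evaluating all $n$ variables simultaneously.
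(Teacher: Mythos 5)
Your proof is correct and follows essentially the same route as the paper: an induction on the number of variables via the identification $\S[x_1,\dots,x_n]\cong(\S[x_1,\dots,x_{n-1}])[x_n]$, using the one-variable hypothesis on slices and the inductive hypothesis on coefficients. The only difference is that you argue the contrapositive (equal functions force equal formal polynomials) while the paper argues directly by exhibiting a point at which two distinct formal polynomials evaluate differently; the two are logically interchangeable.
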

\begin{proof}
The direct implication is proved by strong induction on $n$. The base case $n=1$ is trivial. Suppose for induction that $n \geq 1$ and that distinct formal polynomials in $n$ or fewer variable always represent distinct functions over $\S$, and let $f$ and $g$ be distinct formal polynomials in $n+1$ variables. Choose a variable $x$, and write $f$ and $g$ as $f = \sum p_i x^i$ and $g = \sum q_i x^i$ where the $p_i$s and $q_i$s are polynomials in the variables other than $x$. Since $f$ and $g$ are distinct formal polynomials there is an $i$ for which $p_i$ and $q_i$ are distinct formal polynomials, which by the
inductive hypothesis means that $p_i$ and $q_i$ define distinct functions. Choose values for the variables other than $x$ at which $p_i$ and
$q_i$ take different values. Substituting these into $f$ and $g$ we obtain distinct (since at least the coefficient of $x^i$ differs) formal polynomials in the single variable $x$. By the original assumption, we may choose a value of $x$ at which these polynomials take different values. But these values are
the values of $f$ and $g$ respectively for the chosen values of all the variables. Thus, $f$ and $g$ differ as functions over $\S$.

The converse is obvious in view of the remark above the statement of the lemma.
\end{proof}

\begin{theorem}\label{thm_distinct}
If $\S$ is a commutative semiring in which distinct one-variable polynomials define distinct functions, then $UT_2(\S)$ does not satisfy any semigroup or monoid identity (and
hence neither does $UT_n(\S)$ nor the full matrix semigroup $M_n(\S)$ satisfy any semigroup or monoid identity for $n \geq 2$).
\end{theorem}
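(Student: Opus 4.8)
The plan is to prove the stronger assertion that $UT_2(\S)$ satisfies \emph{no} nontrivial identity, i.e. that an identity $u = v$ (with $u,v \in \Sigma^*$, so that both the semigroup and monoid cases are covered at once) holds in $UT_2(\S)$ only when $u$ and $v$ are the same word. The parenthetical statements for $UT_n(\S)$ and $M_n(\S)$ with $n \geq 2$ then follow immediately, since $UT_2(\S)$ embeds as a subsemigroup of each (for example by the block embedding into the top-left $2 \times 2$ corner, filling the remaining diagonal with $1$), and any identity holding in a semigroup is inherited by every subsemigroup. To handle $UT_2(\S)$ itself I would invoke Theorem~\ref{thm_identities} with $n = 2$: the identity $u = v$ holds in $UT_2(\S)$ if and only if $\rho(u) = \rho(v)$ in $\ol{\S[X_{2,\Sigma}]}\Gamma_{2,\Sigma}$, equivalently if and only if $f_\pi^u = f_\pi^v$ as polynomial \emph{functions} for every path $\pi$ in $\Gamma_{2,\Sigma}$.

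The next step is to use the hypothesis to pass from functions to formal polynomials. By Lemma~\ref{lemma_distinctmultivariate} (applied with as many variables as needed, which is legitimate since any two given polynomials involve only finitely many of the $\sigma_i$), the assumption that distinct one-variable polynomials represent distinct functions over $\S$ implies that distinct formal polynomials in the variables $X_{2,\Sigma}$ represent distinct functions; that is, the evaluation congruence on $\S[X_{2,\Sigma}]$ is trivial. Hence $f_\pi^u = f_\pi^v$ as functions precisely when they coincide as formal polynomials, and it suffices to show that the \emph{formal} polynomials $\{f_\pi^u\}_\pi$ determine $u$. This is the unique point at which the hypothesis is used, and it is essential: for a semiring failing it --- for instance $\S = \mathbb{Z}_2$, for which $x$ and $x^2$ agree as functions --- the formal polynomials can differ while the functions agree, and indeed $UT_2(\mathbb{Z}_2)$ does satisfy identities.

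What remains is the purely combinatorial claim that a word $u \in \Sigma^+$ is recovered from its formal polynomials $f_\pi^u$; here the only relevant paths in $\Gamma_{2,\Sigma}$ are the empty paths and the single edges $\p{1 \xrightarrow{\sigma} 2}$. I would read off $u$ letter by letter from the single-edge coefficients, exactly as in the remark following Theorem~\ref{thm_freerep3}. A term of $f_{\p{1 \xrightarrow{\sigma} 2}}^u$ arises from an occurrence of $\sigma$ in $u$, say at position $k$, and carries $\sigma_1$-degree exactly $k - 1$, since the $k-1$ letters preceding the crossing are read as loops at vertex $1$. Thus for each $k$ there is a single monomial of $\sigma_1$-degree $k-1$ appearing among all these coefficients, it comes from the crossing at position $k$, and the edge-label of the coefficient containing it is precisely the $k$th letter $u_k$; letting $k$ range up to the largest value for which such a term exists recovers both $|u|$ and all of $u$ (the empty word being distinguished as the unique one with all single-edge coefficients zero). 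The one delicate point --- and the only place where a general semiring could cause trouble --- is that this distinguishing monomial must actually occur with nonzero coefficient. But position $k$ of $u$ holds a single letter, so the crossing there is the unique amble contributing that monomial to that coefficient; its coefficient is therefore exactly $1 \in \S \neq 0$, and no additive cancellation can suppress it. This is the main obstacle to dispatch, and once it is settled the recovery determines $u$, completing the argument.
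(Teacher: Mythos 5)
Your proposal is correct, and it shares the paper's first two moves --- reducing identities in $UT_2(\S)$ to equality of the functions $f_\pi^u$ via Theorem~\ref{thm_identities}, and using Lemma~\ref{lemma_distinctmultivariate} to upgrade equality of functions to equality of \emph{formal} polynomials --- but it diverges at the decisive final step. The paper observes that the formal polynomials $f_\pi^w$ are defined independently of $\S$, so an identity holding at the formal level would hold in $UT_2(\S')$ for \emph{every} commutative semiring $\S'$; it then derives a contradiction by citing the known fact that $UT_2(\mathbb{N})$ contains a free subsemigroup of rank $2$. You instead prove directly that the formal polynomials $\lbrace f_\pi^u \rbrace_\pi$ determine the word $u$: each occurrence of a letter at position $k$ contributes to $f_{\p{1 \xrightarrow{u_k} 2}}^u$ a monomial whose total degree in the vertex-$1$ variables is exactly $k-1$, this monomial arises from a unique amble (so its coefficient is $1 \neq 0$ and cannot be lost to cancellation in a semiring), and distinct positions give distinct degrees, so $u$ is read off letter by letter. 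This degree bookkeeping is sound. Your route is self-contained --- it replaces the external appeal to \cite{Volkov15} with an explicit injectivity argument at the formal-polynomial level and in fact establishes the slightly sharper statement that only trivial identities $u=u$ can hold --- at the cost of being somewhat longer; the paper's transfer-to-$\mathbb{N}$ trick is the more economical choice given that the freeness of a subsemigroup of $UT_2(\mathbb{N})$ is already in the literature. (A minor presentational point: what you call the ``$\sigma_1$-degree'' should really be the total degree in all variables indexed by vertex $1$, i.e.\ in the $\tau_1$ for $\tau \in \Sigma$, as your own justification makes clear.)
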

\begin{proof}
By Lemma~\ref{lemma_distinctmultivariate} we may assume that distinct multi-variable polynomials over $\S$ define distinct functions.

Clearly if a semigroup satisfies a monoid identity $u=v$ then it satisfies the semigroup identity $\sigma u = \sigma v$ for any symbol
$\sigma$, so it suffices to show that $UT_2(\S)$ does not satisfy a semigroup identity. Suppose for a contradiction that it satisfies the
identity $u = v$ where $u,v \in \Sigma^+$. By Theorem~\ref{thm_freerep2} this will mean that $f_\pi^u = f_\pi^v$ for every path $\pi$ in, which because of the assumption means that all these equations holds for
formal polynomials. But the definitions of these formal polynomials are independent of the semiring over which we are working
(modulo a formal identification of the multiplicative identity elements of the different semirings). So if this were true then by
Theorem~\ref{thm_freerep2} we would have that the identity $u=v$ is satisfied in $UT_2(\S)$ for \textit{every} semiring $\S$.
However, it is well-known (see for example \cite{Volkov15}) that $UT_2(\mathbb{N})$ (for example) contains a free subsemigroup of rank $2$, and therefore does not
satisfy any semigroup identity.
\end{proof}

We remark in particular that Theorem~\ref{thm_distinct} applies to infinite fields, yielding the following fact which we suspect is known to
experts but which we have not managed to locate in the literature:

\begin{corollary}\label{cor_infinitefields}
If $k$ is a field then $UT_2(k)$ satisfies a semigroup identity if and only if $k$ is finite.
\end{corollary}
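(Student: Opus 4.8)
The plan is to combine Theorem~\ref{thm_distinct} with known results about finite fields, splitting into the two implications of the biconditional.

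For the forward direction, suppose $UT_2(k)$ satisfies a semigroup identity. By the contrapositive of Theorem~\ref{thm_distinct}, $k$ cannot have the property that distinct one-variable polynomials define distinct functions. But over an \emph{infinite} field this property always holds: a nonzero polynomial of degree $d$ in one variable has at most $d$ roots, so two distinct formal polynomials, whose difference is a nonzero polynomial, can agree at only finitely many points and hence define distinct functions on an infinite domain. Therefore $k$ must be finite. This is the crux of the argument, and the main point to get right is the standard fact that a nonzero one-variable polynomial over a field (or indeed any integral domain) has at most $d$ roots; I would invoke this as well-known rather than prove it.

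For the reverse direction, suppose $k$ is finite. Then $UT_2(k)$ is itself a finite semigroup, being a set of $2 \times 2$ matrices over a finite field. Every finite semigroup satisfies a nontrivial identity: indeed, if $|UT_2(k)| = N$ then by pigeonhole the powers $x, x^2, \dots, x^{N+1}$ of any element cannot all be distinct, so $UT_2(k)$ satisfies a torsion identity of the form $x^i = x^j$ with $i < j$ (uniform across the semigroup by taking $i,j$ depending only on $N$). Hence $UT_2(k)$ satisfies a semigroup identity. This direction is essentially immediate and requires no machinery beyond finiteness.

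The only genuine obstacle is ensuring that the hypothesis of Theorem~\ref{thm_distinct} is exactly characterised for fields, namely that ``distinct one-variable polynomials define distinct functions'' holds precisely when $k$ is infinite; the forward direction above supplies the nontrivial inclusion, and for finite fields the property visibly fails (for instance $x^{|k|} - x$ and the zero polynomial agree as functions by Lagrange's theorem applied to the multiplicative group, or directly since every element satisfies $x^{|k|} = x$). Thus the result follows cleanly by pairing Theorem~\ref{thm_distinct} with this polynomial-root dichotomy and the finiteness of $UT_2(k)$.
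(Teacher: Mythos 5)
Your proof is correct and follows essentially the same route the paper intends: the paper derives the corollary directly from Theorem~\ref{thm_distinct} by noting that over an infinite field distinct one-variable polynomials define distinct functions (the standard root-counting argument you give), while the finite case is immediate since $UT_2(k)$ is then a finite semigroup and hence satisfies a torsion identity. No gaps.
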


Theorem~\ref{thm_distinct} also allows us to show that $UT_2(\S)$ need not satisfy an identity even if $\S$ is an idempotent commutative
semiring:

\begin{example}\label{example_freeidpt}
Let $X$ be a non-empty set of symbols and let $\S = \B[X]$ be the semiring of formal polynomials in the variables $X$ over the Boolean
semifield $\B$. Then $\S$ is a commutative, idempotent semiring; in fact it is the free commutative, idempotent semiring on the set $X$.
Now let $t$ be a new symbol not in $X$ and let $p$ and $q$ be two distinct formal polynomials in $\S[t]$. We are interested in whether $p$
and $q$ can be equal in $\overline{\S[t]}$, that is, whether they could both define the same function from the formal polynomial semiring $\B[X]$
to itself.

Choose a symbol $x \in X$ and choose $i \in \mathbb{N}$ to exceed the degrees of $p$ and $q$. We claim that $p(x^i) \neq q(x^i)$ in
$\B[X]$. Indeed, since $p$ and $q$ are distinct formal polynomials there is a $j$ such that the coefficient of $t^j$ is different in $p$ and in
$q$. By swapping $p$ and $q$ if necessary, we may suppose without loss of generality that some monomial $m$ appears in this coefficient
in $p$ but not in $q$. Now it is easy to see that the formal polynomial $p(x^i)$ contains the monomial $mx^{ij}$ while $q(x^i)$ does not. 
Thus, distinct formal polynomials in one variable over $\B[X]$ define distinct functions, so by Theorem~\ref{thm_distinct}, $UT_2(\B[X])$
does not satisfy a semigroup identity.
\end{example}

Recall that a semigroup or monoid is called \textit{locally finite} if every finitely generated subsemigroup or monoid is finite. A
\textit{variety} of semigroups or monoids is called \textit{locally finite} if every finitely generated semigroup or monoid in the variety is finite
(or equivalently, if every semigroup or monoid in the variety is locally finite, or if the finite rank free objects are finite).

A topic of recurring interest is the relationship between the antithetical properties of satisfying a semigroup identity, and containing a free
subsemigroup. It is well-known and easy to see (see for example \cite{Volkov15}) that if $\S$ is a semiring of characteristic $0$ (that is,
in which the multiplicative identity $1$ generates an infinite subsemigroup under addition) then $UT_2(\S)$
contains a free subsemigroup of rank $2$ (and therefore satisfies no semigroup identities).
Okninski and Salwa \cite{Okninski95} (see also \cite[Theorem 6.11]{Okninski98}) showed (among other things) that matrix semigroups over \textit{finitely generated} fields satisfy a \textit{generalised Tits alternative}: such a semigroup either satisfies some semigroup identity or contains a free
subsemigroup of rank $2$. Combined with the above observation, this leads immediately to a complete description of fields $k$ for which $UT_2(k)$ contains a free subsemigroup:

\begin{corollary}\label{cor_infinitefields2}
If $k$ is a field then $UT_2(k)$ contains a free subsemigroup of rank $2$ if and only if $k$ is not locally finite.
\end{corollary}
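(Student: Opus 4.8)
The plan is to prove the two implications separately, combining the internal result Corollary~\ref{cor_infinitefields} with the external generalised Tits alternative of Okninski and Salwa \cite{Okninski95}. The key preliminary observation is that a field $k$ fails to be locally finite precisely when it contains a \emph{finitely generated infinite} subfield $k_0$: local finiteness of a field means by definition that every finitely generated subfield is finite, so its negation directly supplies such a $k_0$.

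For the implication that non-local-finiteness forces a free subsemigroup, I would fix such a finitely generated infinite subfield $k_0 \subseteq k$. Since $k_0$ is finitely generated, the generalised Tits alternative of \cite{Okninski95} applies to the matrix semigroup $UT_2(k_0)$: it either satisfies a semigroup identity or contains a free subsemigroup of rank $2$. But $k_0$ is infinite, so Corollary~\ref{cor_infinitefields} rules out the first alternative, forcing $UT_2(k_0)$ to contain a free subsemigroup of rank $2$. As the inclusion $k_0 \subseteq k$ induces an entrywise embedding $UT_2(k_0) \hookrightarrow UT_2(k)$, this free subsemigroup sits inside $UT_2(k)$, as required.

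For the converse, I would argue by contraposition: if $k$ is locally finite then $UT_2(k)$ cannot contain a free subsemigroup of rank $2$. The point is that $UT_2(k)$ is then locally finite \emph{as a semigroup}: any finite collection of matrices has entries in a finitely generated subfield $k_0$ of $k$, which is finite by hypothesis, so the subsemigroup they generate lies inside the finite set $UT_2(k_0)$. Since a free subsemigroup of rank $2$ is infinite yet two-generated, it cannot occur inside a locally finite semigroup.

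I do not expect a genuine obstacle here, as the argument is essentially an assembly of Corollary~\ref{cor_infinitefields} with the cited Tits-alternative input; the only points requiring care are verifying that the chosen subfield $k_0$ is simultaneously finitely generated (to invoke \cite{Okninski95}) and infinite (to invoke Corollary~\ref{cor_infinitefields}), and checking that local finiteness passes from the field $k$ to the semigroup $UT_2(k)$. In characteristic $0$ one could alternatively bypass \cite{Okninski95} altogether and invoke the well-known direct construction of a rank-$2$ free subsemigroup in $UT_2(\S)$ for any characteristic-$0$ semiring, but the uniform subfield argument above handles both characteristics at once.
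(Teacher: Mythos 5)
Your proposal is correct and follows essentially the same route as the paper: the paper likewise handles the locally finite direction by noting that matrix semigroups over a locally finite field are locally finite, and the other direction by choosing an infinite finitely generated subfield, applying Corollary~\ref{cor_infinitefields} to exclude an identity, and then invoking the generalised Tits alternative of Okni\'{n}ski and Salwa to produce a rank-$2$ free subsemigroup that embeds into $UT_2(k)$.
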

\begin{proof}
If $k$ is locally finite then clearly every semigroup of matrices over $k$ is locally finite, so in particularly cannot contain a
non-trivial free subsemigroup. Conversely, if $k$ is not locally finite then we may choose an infinite, finitely generated subfield $m$ of $k$. By Corollary~\ref{cor_infinitefields}, $UT_2(m)$ satisfies
no semigroup identity, so by the generalised Tits alternative of Okninski and Salwa \cite[Theorem~6.11]{Okninski98}, $UT_2(m)$ contains a free subsemigroup of rank $2$, and hence so does $UT_2(k)$.
\end{proof}

We now apply our results about free objects to describe when the semigroup and monoid varieties generated by $UT_n(\S)$ are locally finite. We shall need some basic
preliminary results.

\begin{proposition}\label{prop_localfinitequiver}
If $\S$ is a locally finite commutative semiring and $\Gamma$ a finite quiver without loops then the quiver algebra $\S \Gamma$ is a locally finite semiring.
\end{proposition}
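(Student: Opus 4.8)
The plan is to exploit the fact that a finite loopless quiver, being a finite directed acyclic graph, carries only finitely many paths, and to combine this with the local finiteness of $\S$ so as to trap every finitely generated subsemiring of $\S\Gamma$ inside a single finite subsemiring. First I would observe that since $\Gamma$ is finite and acyclic and has no loops, any path visits each vertex at most once, so every path has length at most one less than the number of vertices; as there are only finitely many edges, the path set $\Gamma^*$ is finite. Consequently every function $\Gamma^* \to \S$ is automatically finitely supported, so as a set $\S\Gamma$ coincides with $\S^{\Gamma^*}$.

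Next, given a finite generating set $g_1, \dots, g_k$ of a subsemiring of $\S\Gamma$, I would let $T$ be the subsemiring of $\S$ generated by the finite collection of coefficients $\{g_i(\pi) \mid 1 \leq i \leq k,\ \pi \in \Gamma^*\}$; this collection is finite because both $k$ and $\Gamma^*$ are finite, and $T$ automatically contains $0$ and $1$. By local finiteness of $\S$, the subsemiring $T$ is finite. The key step is then to verify that the subsemiring of $\S\Gamma$ generated by $g_1, \dots, g_k$ lies inside $T\Gamma$, the set of functions $\Gamma^* \to T$, which is a subsemiring of $\S\Gamma$ containing its identity. This is immediate for addition, and for the convolution product it follows because each coefficient $(fg)(\pi) = \sum_{st = \pi} f(s)g(t)$ is a finite sum of products of elements of $T$ and hence again lies in $T$. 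Since $T$ and $\Gamma^*$ are both finite, $T\Gamma$ is finite, so the generated subsemiring, being contained in it, is finite as required.

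The argument is essentially routine once the right finite object $T\Gamma$ has been identified, and the only point genuinely requiring care is the finiteness of $\Gamma^*$; this is precisely where the hypotheses that $\Gamma$ is finite and loopless are used, since a single loop (or indeed any cycle) would already produce infinitely many paths and defeat the conclusion. I expect no real obstacle beyond being explicit that the finitely supported condition is vacuous here and that the convolution multiplication keeps coefficients within $T$.
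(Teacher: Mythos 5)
Your proposal is correct and follows essentially the same route as the paper's proof: collect the finitely many coefficients of the generators, pass to the finite subsemiring $U$ of $\S$ they generate, and observe that the generated subsemiring of $\S\Gamma$ is trapped inside the finite semiring $U\Gamma$. You are merely more explicit than the paper about why $\Gamma^*$ is finite (the loopless acyclicity bounding path length) and about checking closure of coefficients under convolution, both of which the paper leaves implicit.
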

\begin{proof}
Suppose $\S$ is locally finite and let $T$ be a finite subset of $\S \Gamma$. Since elements of
$\S \Gamma$ have finite support, only finitely many elements of $\S$ appear as coefficients in elements
in $T$; let $U$ be the subsemiring of $\S$ generated by these elements. It
follows from the definition of addition and multiplication in
$S \Gamma$ that every element of the subsemiring generated by $T$ has coefficients in $U$; in other
words $T$ lies inside $U \Gamma$. But by the assumption $\S$ is locally finite, so $U$ is finite and
$U \Gamma$ is finite.
\end{proof}

\begin{proposition}\label{prop_locallyfinite}
Let $\S$ be a commutative semiring, and suppose that for every finite set $X$ of variables the polynomial function semiring $\ol{\S[X]}$ is locally finite. Then for every $n \geq 1$ the semigroup $UT_n(\S)$ generates a locally finite variety of semigroups and of monoids.
\end{proposition}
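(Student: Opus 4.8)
The plan is to combine the free‐object representation from Theorem~\ref{thm_freerep1} with the local finiteness of quiver algebras from Proposition~\ref{prop_localfinitequiver}. Since a variety of semigroups (or monoids) is locally finite precisely when its finite-rank free objects are finite, it suffices to show that for every \emph{finite} alphabet $\Sigma$ the free object of rank $|\Sigma|$ in the variety generated by $UT_n(\S)$ is finite. By Theorem~\ref{thm_freerep1}, this free object is (isomorphic to) $\rho(\Sigma^*)$ [respectively $\rho(\Sigma^+)$], a finitely generated multiplicative subsemigroup of the quiver algebra $\ol{\S[X_{n,\Sigma}]}\Gamma_{n,\Sigma}$. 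So the whole question reduces to showing this particular subsemigroup is finite.

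The key observation is that, because $\Sigma$ is finite, the variable set $X_{n,\Sigma} = \lbrace \sigma_v \mid \sigma \in \Sigma,\, v \in [n] \rbrace$ is a finite set of variables. By the hypothesis of the proposition, the polynomial function semiring $\ol{\S[X_{n,\Sigma}]}$ is therefore locally finite. Moreover, the quiver $\Gamma_{n,\Sigma}$ has finite vertex set $[n]$ and, when $\Sigma$ is finite, finitely many edges, and (by construction) no loops; hence it is a finite loopless quiver. First I would invoke Proposition~\ref{prop_localfinitequiver} with the locally finite semiring $\ol{\S[X_{n,\Sigma}]}$ in place of $\S$ and the finite loopless quiver $\Gamma_{n,\Sigma}$ in place of $\Gamma$, concluding that $\ol{\S[X_{n,\Sigma}]}\Gamma_{n,\Sigma}$ is itself a locally finite semiring.

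Now $\rho(\Sigma^*)$ is generated as a multiplicative monoid by the finite set $\lbrace \rho(\sigma) \mid \sigma \in \Sigma \rbrace$, so it is a finitely generated \emph{multiplicative} subsemigroup of the locally finite semiring $\ol{\S[X_{n,\Sigma}]}\Gamma_{n,\Sigma}$. Since the multiplicative subsemigroup generated by a finite set $T$ is contained in the (two-sided, additive-multiplicative) subsemiring generated by $T$, local finiteness of the semiring forces this multiplicative subsemigroup to be finite as well. Thus $\rho(\Sigma^*)$, and a fortiori its subsemigroup $\rho(\Sigma^+)$, is finite, which is exactly what is needed. The same argument in the monoid and semigroup cases gives finiteness of all finite-rank free objects, so the variety is locally finite in both senses.

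The step I expect to require the most care is the passage from local finiteness of the ambient semiring to finiteness of the finitely generated \emph{multiplicative} subsemigroup: local finiteness as defined for semirings guarantees that finitely generated \emph{subsemirings} are finite, and one must observe that any finite generating set for a multiplicative subsemigroup also generates a finite subsemiring containing it. This is routine but worth stating explicitly, since the multiplicative structure is the one relevant to the free object while the semiring local finiteness controls the additive-multiplicative closure. Everything else is a matter of verifying that the finiteness hypotheses on $\Sigma$ feed correctly into the finiteness of $X_{n,\Sigma}$ and of $\Gamma_{n,\Sigma}$, which is immediate from the definitions.
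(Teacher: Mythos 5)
Your proposal is correct and follows essentially the same route as the paper: reduce to finiteness of the finite-rank free objects, embed them via Theorem~\ref{thm_freerep1} as finitely generated multiplicative subsemigroups of $\ol{\S[X_{n,\Sigma}]}\Gamma_{n,\Sigma}$, and apply Proposition~\ref{prop_localfinitequiver}. The one point you flag for care --- passing from semiring local finiteness to finiteness of a finitely generated multiplicative subsemigroup --- is exactly the step the paper handles with the phrase ``locally finite as a semiring, and hence also as semigroup under multiplication,'' and your justification of it is sound.
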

\begin{proof} 
It suffices to show that all the free objects of finite rank in the given varieties are finite. 
Let $\Sigma$ be a finite alphabet and $n \geq 1$, and let $X_{n,\Sigma}$ and $\Gamma_{n,\Sigma}$ be as defined in Section~\ref{sec_freeobj}. By assumption  $\ol{\S[X_{n,\Sigma}]}$ is locally finite as a semiring, so by Proposition~\ref{prop_localfinitequiver}, $\ol{\S[X_{n,\Sigma}]} \Gamma_{n,\Sigma}$ is locally
finite as a semiring, and hence also as semigroup under multiplication. But by Theorem~\ref{thm_freerep1}, the free objects of rank $|\Sigma|$
in the varieties of semigroups and monoids generated by $UT_n(\S)$ embed as (finitely generated) multiplicative subsemigroups of $\ol{\S[X_{n,\Sigma}]} \Gamma_{n,\Sigma}$, and hence are finite. 
\end{proof}

\begin{theorem}\label{thm_locallyfinite}
Let $\S$ be a commutative semiring and $n \geq 1$. The following are equivalent:
\begin{itemize}
\item[(i)] the semigroup variety (or the monoid variety) generated by $UT_n(S)$ is locally finite;
\item[(ii)] $S$ satisfies a multiplicative identity not implied by commutativity;
\item[(iii)] $S$ satisfies a multiplicative torsion identity (that is, an identity of the form $A^i = A^j$ with $i < j$);
\end{itemize}
\end{theorem}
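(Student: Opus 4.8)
The plan is to prove the theorem by establishing the cycle of implications $(iii) \Rightarrow (i) \Rightarrow (ii) \Rightarrow (iii)$, which is the most economical way to close a three-way equivalence. The two implications $(ii) \Leftrightarrow (iii)$ are essentially a statement about the multiplicative commutative monoid $(\S, \times)$: a commutative monoid satisfies some nontrivial identity if and only if it satisfies a torsion identity. Indeed $(iii) \Rightarrow (ii)$ is immediate since a torsion identity $A^i = A^j$ with $i \neq j$ is certainly not implied by commutativity. For $(ii) \Rightarrow (iii)$, I would argue that any multiplicative identity $u = v$ in commuting variables, once we collect exponents, reduces to an identity of the form $\prod_\sigma \sigma^{a_\sigma} = \prod_\sigma \sigma^{b_\sigma}$; if this is not implied by commutativity then $a_\sigma \neq b_\sigma$ for some variable $\sigma$, and substituting $1$ for all other variables yields a genuine torsion identity $\sigma^{a_\sigma} = \sigma^{b_\sigma}$ in the single variable $\sigma$.

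The substantive direction is $(iii) \Rightarrow (i)$. Here I want to invoke Proposition~\ref{prop_locallyfinite}: it suffices to show that for every finite variable set $X$ the polynomial function semiring $\ol{\S[X]}$ is locally finite. Assume $\S$ satisfies $A^i = A^j$ with $i < j$ multiplicatively. The key point is that this bounds the exponents appearing in monomial \emph{functions}: since any variable raised to the power $i$ equals that variable raised to the power $j$ as a function over $\S$, every monomial function over $\S$ in the variables $X$ equals one in which each variable appears with exponent in the finite set $\{0, 1, \dots, j-1\}$. Hence there are only finitely many distinct monomial functions in $\ol{\S[X]}$. A general element of $\ol{\S[X]}$ is a sum of such monomial functions with coefficients in $\S$; so if we take a finite subset $T \subseteq \ol{\S[X]}$, the subsemiring it generates involves only finitely many monomial functions and only coefficients drawn from the subsemiring of $\S$ generated by the finitely many coefficients appearing in $T$. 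For this to give finiteness I additionally need that subsemiring of $\S$ to be finite, which follows because the torsion identity forces $(\S, \times)$ to be locally finite and I can likewise arrange $(\S, +)$ to be controlled.

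The main obstacle I anticipate is precisely this last point: the torsion identity constrains \emph{multiplication} in $\S$, but local finiteness of $\ol{\S[X]}$ also requires control of the \emph{additive} structure, since the coefficients of polynomial functions live in $(\S, +)$. A finitely generated subsemiring of $\S$ need not be finite just because its multiplicative monoid is torsion. I would therefore examine whether the torsion identity, when applied within the semiring of polynomials rather than in $\S$ itself, forces an additive collapse as well — for instance by evaluating the identity $A^i = A^j$ on cleverly chosen polynomial arguments (as in Example~\ref{example_freeidpt}) to extract additive consequences. The cleanest resolution is likely to show directly that the relevant finitely generated subsemiring of $\ol{\S[X]}$ is finite: there are boundedly many monomial functions, and for each the set of coefficients that can arise in the generated subsemiring is itself finite because products and sums of the finitely many initial coefficients, reduced modulo the torsion relation, cannot escape a finite set. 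Verifying this additive finiteness carefully, and confirming it follows solely from the hypothesis $(iii)$ rather than requiring a separate additive assumption, is the delicate step; once it is in hand, Propositions~\ref{prop_localfinitequiver} and \ref{prop_locallyfinite} deliver $(i)$ immediately.
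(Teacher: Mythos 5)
Your reduction of $(ii)\Leftrightarrow(iii)$ and your appeal to Propositions~\ref{prop_localfinitequiver} and \ref{prop_locallyfinite} match the paper's strategy, but the proposal has two genuine gaps. First, the cycle you announce requires $(i)\Rightarrow(ii)$, yet you never address it: you establish $(ii)\Leftrightarrow(iii)$ and (partially) $(iii)\Rightarrow(i)$, which together say nothing about what follows from $(i)$. The paper closes this by the contrapositive: if $\S$ satisfies no multiplicative identity beyond commutativity, then $UT_1(\S)$ --- and hence $UT_n(\S)$ --- generates a variety whose rank-one free object is the free monogenic semigroup, which is infinite, so the variety is not locally finite.

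Second, and more substantively, the step you yourself flag as ``delicate'' --- controlling the \emph{additive} structure of a finitely generated subsemiring of $\S$ --- is exactly where the real content of $(iii)\Rightarrow(i)$ lies, and you leave it unresolved. The missing idea is a single substitution: put $A=1+1$ into the torsion identity $A^i=A^j$. Since $(1+1)^k$ is the sum of $2^k$ copies of $1$, this forces the additive subsemigroup generated by $1$ to be finite, say with $p$ elements; then for any $a\in\S$ the $m$-fold sum of $a$ equals $(1+\cdots+1)a$, so every monogenic additive subsemigroup also has at most $p$ elements. Combined with the bound $j$ on exponents, a finitely generated subsemiring of $\S$ consists of sums, over finitely many bounded-exponent monomials in the generators, of additive ``multiples'' each drawn from a finite set; hence $\S$ is locally finite as a semiring, and the same bookkeeping makes $\ol{\S[X]}$ locally finite exactly as you outline. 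Without this observation (or a substitute for it) your argument does not go through, since a multiplicative torsion identity does not on its face prevent $(\S,+)$ from containing a free monogenic subsemigroup. A minor further point: in your derivation of $(ii)\Rightarrow(iii)$, substituting $1$ for all variables but one can produce $1=\sigma^{b}$ when the distinguished variable is absent from one side; multiply through by $\sigma$ to land in the required form $A^i=A^j$ with $1\le i<j$.
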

\begin{proof}
For $(i) \implies (ii)$ we prove the contrapositive. If $\S$ does not satisfy a multiplicative identity not implied by commutativity then $UT_1(\S)$ does not satisfy any identity not implied by commutativity, so the semigroup [monoid] variety it generates (and hence also those generated by $UT_n(\S)$ for $n \geq 2$) contain the free semigroup [monoid] of rank $1$ and is not locally finite.

For $(ii) \implies (iii)$, suppose $\S$ satisfies a multiplicative identity not implied by commutativity.
Then there must be some letter which occurs a different number of times on the left and right of the identity, and it follows that by substituting different powers of a single variable $A$ for the variables in this identity, we may obtain an identity of the form $A^i =A^j$ for some $i < j$.

Finally, we show that $(iii) \implies (i)$. Suppose $\S$ satisfies the multiplicative identity $A^i = A^j$ where $i < j$. It follows that two distinct powers of the element $1+1$ are equal, from which we may deduce that $\S$ has finite characteristic, that is, the additive
subsemigroup generated by $1$ is finite, say with $p$ elements. Then every monogenic additive subsemigroup has $p$ or fewer elements.
It follows that
$\S$ itself is locally finite as a semiring. Indeed, if $T$ is a finite subset of $\S$ then every element of the semiring generated by $T$
can be written as a polynomial in the elements of $T$ with integer coefficients at most $p$ and powers less than $j$.

Now if $X$ is any finite set of variables then clearly the monomials $x^i$ and $x^j$ define the same function for each $x \in X$. Let $T$ be a finite subset
of $\overline{\S[X]}$, and let $T'$ be a set of unique representatives in $S[X]$ for he functions in $T$. Let $U$ be the set of elements in $\S$
which occur as coefficients in formal polynomials in $T'$'; then $U$ generates a finite subsemiring $V$ of $\S$. Then it is easy to see that every function in the subsemiring of $\overline{\S[X]}$ generated by $T$ can be written as a sum of monomials, each of which has coefficient from $V$ from and no exponent greater than $j$. Since there are only finitely many such monomials, we deduce that the subsemiring generated by $T$ is
finite. This shows that $\overline{\S[X]}$ is locally finite, and the claim now follows from Proposition~\ref{prop_locallyfinite}.
\end{proof}

Implicit in the proof is that a necessary condition for $UT_n(\S)$ to generate a locally finite variety is that $\S$ itself be is locally finite as a semiring. This condition is not sufficient, as the following example shows:

\begin{example}\label{example_interval}
Let $\S = \lbrace -\infty \rbrace \cup [0,1]$ where $[0,1]$ is the closed interval in $\mathbb{Q}$ (or in $\mathbb{R}$, which gives a different
semiring with the same properties for this purpose), with operations defined by
$$x \oplus y = \max(x,y) \textrm{ and } x \otimes y = \min(x+y, 1).$$
Then $\S$ is a commutative, idempotent semiring with zero element $-\infty$ and identity element $0$. It is easy to see that
$\S$ is locally finite (and it follows easily that the semigroups $UT_n(\S)$ are locally finite for all $n \geq 1$). However, $\S$ does
not satisfy a multiplicative torsion identity (the identity $A^i = A^j$ with $i < j$ being falsified by for example $A = 1/j$), so by
Theorem~\ref{thm_locallyfinite} the variety generated by $UT_n(\S)$ is not locally finite for any $n \geq 1$.
\end{example}

\section*{Acknowledgements}
The author thanks the organisers and participants of the 56th Summer School on General Algebra and Ordered Sets (held in \v{S}pindler\r{u}v Ml\'{y}n, Czechia in September 2018), questions and conversations from which prompted this line of research. He also thanks Marianne Johnson for many helpful conversations.

\bibliographystyle{plain}

\begin{thebibliography}{10}

\bibitem{Adjan66}
S.~I. Adjan.
\newblock {\em Defining relations and algorithmic problems for groups and
  semigroups}.
\newblock Proceedings of the Steklov Institute of Mathematics, No. 85 (1966).
  Translated from the Russian by M. Greendlinger. American Mathematical
  Society, Providence, R.I., 1966.

\bibitem{Birkhoff35}
G.~Birkhoff.
\newblock On the structure of abstract algebras.
\newblock {\em Math. Proc. Cambridge Philos. Soc.}, 131:433--454, 1935.

\bibitem{Chen16}
Yuzhu Chen, Xun Hu, Yanfeng Luo, and O.~Sapir.
\newblock The finite basis problem for the monoid of two-by-two upper
  triangular tropical matrices.
\newblock {\em Bull. Aust. Math. Soc.}, 94(1):54--64, 2016.

\bibitem{Cohn81}
P.~M. Cohn.
\newblock {\em Universal algebra}, volume~6 of {\em Mathematics and its
  Applications}.
\newblock D. Reidel Publishing Co., Dordrecht, second edition, 1981.

\bibitem{Daviaud17}
L.~Daviaud and M.~Johnson.
\newblock The shortest identities for max-plus automata with two states.
\newblock In {\em 42nd {I}nternational {S}ymposium on {M}athematical
  {F}oundations of {C}omputer {S}cience}, volume~83 of {\em LIPIcs. Leibniz
  Int. Proc. Inform.}, pages Art. No. 48, 13. Schloss Dagstuhl. Leibniz-Zent.
  Inform., Wadern, 2017.

\bibitem{Daviaud18}
L.~Daviaud, M.~Johnson, and M.~Kambites.
\newblock Identities in upper triangular tropical matrix semigroups and the
  bicyclic monoid.
\newblock {\em J. Algebra}, 501:503--525, 2018.

\bibitem{Izhakian14}
Z.~Izhakian.
\newblock Semigroup identities in the monoid of triangular tropical matrices.
\newblock {\em Semigroup Forum}, 88(1):145--161, 2014.

\bibitem{Izhakian14erratum}
Z.~Izhakian.
\newblock Erratum to: {S}emigroup identities in the monoid of triangular
  tropical matrices.
\newblock {\em Semigroup Forum}, 92:733, 2016.

\bibitem{Izhakian10}
Z.~Izhakian and S.~W. Margolis.
\newblock Semigroup identities in the monoid of two-by-two tropical matrices.
\newblock {\em Semigroup Forum}, 80:191--218, 2010.

\bibitem{Izhakian18}
Z.~Izhakian and G.~Merlet.
\newblock Semigroup identities of tropical matrices through matrix ranks.
\newblock {\tt arXiv:1806.11028}, 2018.

\bibitem{Johnson18}
M.~Johnson and P.~Fenner.
\newblock Identities in unitriangular and gossip monoids.
\newblock {\em Semigroup Forum}, 98:338--354, 1998.

\bibitem{Johnson18b}
M.~Johnson and Ngoc~Mai Tran.
\newblock Geometry and algorithms for upper triangular tropical matrix
  identities.
\newblock {\em Journal of Algebra (to appear)}, 2019.
\newblock Preprint at {\tt arXiv:1806.01835}.

\bibitem{Okninski98}
J.~Okni{\'n}ski.
\newblock {\em Semigroups of matrices}, volume~6 of {\em Series in Algebra}.
\newblock World Scientific Publishing Co. Inc., River Edge, NJ, 1998.

\bibitem{Okninski15}
J.~Okni\'{n}ski.
\newblock Identities of the semigroup of upper triangular tropical matrices.
\newblock {\em Comm. Algebra}, 43(10):4422--4426, 2015.

\bibitem{Okninski95}
J.~Okni\'{n}ski and A.~Salwa.
\newblock Generalised {T}its alternative for linear semigroups.
\newblock {\em J. Pure Appl. Algebra}, 103:211--220, 1995.

\bibitem{Scheiblich71}
H.~E. Scheiblich.
\newblock A characterization of a free elementary inverse semigroup.
\newblock {\em Semigroup Forum}, 2:76--79, 1971.

\bibitem{Shitov18}
Y.~Shitov.
\newblock A semigroup identity for tropical {$3\times3$} matrices.
\newblock {\em Ars Math. Contemp.}, 14(1):15--23, 2018.
\newblock {Retracted but available in preprint form at \tt
  arXiv:1804.11100\rm}.

\bibitem{Taylor17}
M.~Taylor.
\newblock {\em On upper triangular tropical matrix semigroups, tropical matrix
  identities and T-modules}.
\newblock PhD thesis, University of Manchester, 2017.

\bibitem{Volkov15}
M.~V. Volkov.
\newblock A nonfinitely based semigroup of triangular matrices.
\newblock In {\em Semigroups, algebras and operator theory}, volume 142 of {\em
  Springer Proc. Math. Stat.}, pages 27--38. Springer, New Delhi, 2015.

\end{thebibliography}

\def\cprime{$'$} \def\cprime{$'$}

\end{document}